\theoremstyle{plain}
\newtheorem{theorem}{Theorem}[section]
\newtheorem{corollary}[theorem]{Corollary}
\theoremstyle{definition}
\newtheorem{definition}[theorem]{Definition}
\newtheorem{remark}[theorem]{Remark}
\newtheorem{example}[theorem]{Example}
\newtheorem{proposition}[theorem]{Proposition}
\newcommand{\bC}{\ensuremath{\mathbb{C}}}
\newcommand{\bP}{\ensuremath{\mathbb{P}}}
\newcommand{\bR}{\ensuremath{\mathbb{R}}}
\newcommand{\scA}{\ensuremath{\mathcal{A}}}
\newcommand{\scB}{\ensuremath{\mathcal{B}}}
\newcommand{\scF}{\ensuremath{\mathcal{F}}}
\newcommand{\scL}{\ensuremath{\mathcal{L}}}
\newcommand{\sfM}{\mathsf{M}}
\newcommand{\ch}{\mathsf{ch}}
\newcommand{\Sep}{\operatorname{Sep}}
\newcommand{\Ker}{\operatorname{Ker}}
\newcommand{\A}{\scA}
\newcommand{\Coker}{\operatorname{Coker}}
\newcommand{\R}{\bR}
\newcommand{\RB}{\operatorname{RB}}
\definecolor{deepblue}{cmyk}{0,0.83,1,0.70}
\definecolor{gray}{cmyk}{0,0,0,0.3}
\definecolor{rred}{cmyk}{0,1,1,0}
\definecolor{chairo}{cmyk}{0,0.83,1,0.70}
\definecolor{roypur}{cmyk}{0.75,0.90,0,0.1}
\definecolor{darkorc}{cmyk}{0.40,0.80,0.20,0}
\definecolor{oliv}{cmyk}{0.64,0.00,0.75,0.56}
\definecolor{azuro}{cmyk}{1,1,0,0.46}
\title{Resonant bands and local system cohomology groups for 
real line arrangements}
\author{Masahiko Yoshinaga\thanks{Department of Mathematics, 
Hokkaido University, 
North 10, West 8, Kita-ku, 
Sapporo, 060-0810, 
JAPAN, 
Email: yoshinaga@math.sci.hokudai.ac.jp}
}
\date{\today}
\begin{document}
\maketitle

\begin{abstract}
We give a new algorithm computing local system 
cohomology groups for complexified real line arrangements. 
Using it, we obtain several conditions for the first local system 
cohomology to vanish and to be at most one-dimensional, which 
generalize a result by Cohen-Dimca-Orlik. 
The conditions are described in terms of discrete geometric 
structures of real figures. The proof is based on 
a recent study on minimal cell structures. 
We also compute the characteristic 
variety of the deleted $B_3$-arrangement. 
%For a given rank-one local system on the complement of a complexified line 
%arrangement, we introduce discrete-geometric objects, the standing waves 
%on the resonant bands, which are elements in the vector space spanned by 
%chambers. We can compute the local system cohomology group by using 
%linear relations among standing waves. They directly connects 
%the real figure with the local system cohomology. 
%And for some cases, the cohomology is exactly determined. 

%Generalize Cohen-Dimca-Orlik. 
%If the line at infinity is not resonant, then 
%the computation is reduced. 

%\noindent
%{\bf MSC-class}: 32S22 (Primary) 52C35, 32S50 (Secondary)\\
%{\bf Keywords}: Arrangements, Chambers, Minimal CW-complex, 
%Orlik-Solomon algebras, Aomoto complex. 
\end{abstract}

\section{Introduction}

In the theory of hyperplane arrangements, one of the central problems 
is to what extent topological invariants of the complements are 
determined combinatorially. For example, the cohomology ring 
is combinatorially determined (Orlik and Solomon), 
while the fundamental group is not 
(Rybnikov). Between these two cases, 
local system cohomology groups and monodromy eigenspaces 
of Milnor fibers recently received a considerable amount of attention. 

%In the theory of hyperplane arrangements, one of central problems 
%is to what extent topological invariants of the complements are 
%determined by combinatorially. For example, the cohomology ring 
%is combinatorially determined (Orlik-Solomon). 
%However the fundamental group can not be determined combinatorially 
%in general (Rybnikov). 
%Between these two cases, 
%local system cohomology groups 
%recently received a considerable amount of attention. 

There are several ways to compute local system cohomology groups, 
especially for line arrangements. In this paper, we use 
the twisted minimal complex in \cite{yos-lef, yos-ch, yos-loc}. 
Since the complex is described in terms of adjacency relations of chambers, 
we can employ discrete-geometric arguments to the computation of 
local system cohomology groups. 
By using combinatorial arguments, we obtain several conditions on 
rank-one local systems for the first cohomology to vanish, also 
a condition for the first cohomology is at most one-dimensional. 

The paper is organized as follows. 
In \S\ref{sec:pre} we fix some notation and 
recall the twisted minimal complex. 
\S\ref{sec:main} is the main section of the paper. 
First, in \S\ref{sec:resban} 
we introduce discrete geometric notions, the so-called 
{\em $\scL$-resonant band} and the {\em standing wave} on this band. 
These notions were first introduced in \cite{yos-mil} in the 
study of eigenspaces of monodromy action on the Milnor fibers. 
They are also useful for the computation of local system cohomology 
groups. We give an algorithm that computes local system 
cohomology group in terms of resonant bands in 
\S\ref{sec:eigensp}. Then in \S\ref{sec:vanish} and 
\S\ref{subsec:upper}, we give a few upper bounds of 
$\dim H^1$. Finally in \S\ref{sec:ex}, we apply our 
algorithm to the deleted $B_3$-arrangement and compute 
the characteristic variety.

\section{Preliminaries}
\label{sec:pre}
\subsection{Line arrangements and local systems}
\label{sec:milnor}

Let $\A=\{H_1, \dots, H_n\}$ be an affine line arrangement in 
$\R^2$ with the defining equation $Q_\A(x, y)=\prod_{i=1}^n\alpha_i$, 
where $\alpha_i$ is a defining linear equation for $H_i$. 
In this paper, we assume that not all lines are parallel 
(or equivalently, $\A$ has at least one intersection). 
The coning $c\A$ of $\A$ is an arrangement of $n+1$ 
planes in $\R^3$ defined by the equation 
$Q_{c\A}(x, y, z)=z^{n+1}Q(\frac{x}{z}, \frac{y}{z})$. 
The line $\{z=0\}\in c\A$ is called the line at infinity and 
denoted by $H_\infty$. 
The space $\sfM(\A)=\bC^2\setminus\{Q_\A=0\}=
\bP_\bC^2\setminus\{Q_{c\A}=0\}$ is called the complexified 
complement. In this article, $\A$ always denote a line arrangement 
in $\R^2$ and $c\A$ denotes that in $\R\bP^2$. 
%We call 
%$p\in\R\bP^2$ a {\em multiple point} if the multiplicity of 
%$c\A$ at $p$ (That is, the number of lines passing through $p$) 
%is greater or equal to $3$. 

A rank-one local system $\scL$ is determined by a homomorphism 
$$
\pi_1(M(\A), *)\longrightarrow \bC^*. 
$$
Since the right-hand side is Abelian, the map can be lift to 
$H_1(M(\A))\longrightarrow\bC^*$. Recall that $H_1(M(\A))$ is 
a free Abelian group generated by meridian loops around 
$H_i\in\A$. Hence $\scL$ is determined by the point 
$(q_1, \dots, q_n)\in(\bC^*)^n$ of the character torus, where 
$q_i\in\bC^*$ is a monodromy along the meridian loop around 
the line $H_i$. The monodromy around the line at infinity 
$H_\infty$ is automatically determined as 
$$
q_\infty=(q_1q_2\cdots q_n)^{-1}. 
$$
Let us denote by $\scL_{\bm{q}}$ the local system determined 
by $\bm{q}=(q_1, \dots, q_n)\in\bC^n$. 
Let $X\subset\R\bP^2$ be a subset (e.g. a point or a line). 
Denote $\A_X=\{H\in c\A\mid H\supset X\}$ and $q_X=
\prod_{H\in c\A_X}q_H$. We fix some terminology. 

\begin{definition}
(1) A line $H\in c\A$ is said to be {\em resonant} if $q_H=1$. 

(2) A point $X\in\R\bP^2$ is called a {\em multiple point} if 
at least three lines in $c\A$ are passing through $X$. 

(3) A multiple point $X$ is called a {\em resonant point} if 
$q_X=1$. 
\end{definition}
We also use notations $q_{ijk}:=q_iq_jq_k$, 
$q_{ijk}^{1/2}:=q_{i}^{1/2}q_{j}^{1/2}q_{k}^{1/2}$.

\subsection{Twisted minimal cochain complexes}
\label{sec:twisted}

In this section, we recall the construction of the 
twisted minimal cochain complex from 
\cite{yos-lef, yos-ch, yos-loc}. 
%, 
%which will be used for the computation of the 
%right hand side of (\ref{eq:eigen}). 

A connected component of $\bR^2\setminus\bigcup_{H\in\scA}H$ 
is called a chamber. The set of all chambers is denoted by 
$\ch(\scA)$. A chamber $C\in\ch(\A)$ is called bounded 
(resp. unbounded) if the area is finite (resp. infinite). 
For an unbounded chamber $U\in\ch(\A)$, the opposite unbounded 
chamber is denoted by $U^\lor$ (see \cite[Definition 2.1]{yos-loc} for 
the definition; see also Figure \ref{fig:numbering} below). 

Let $\scF$ be a generic flag in $\bR^2$
$$
\scF:
\emptyset=
\scF^{-1}\subset
\scF^{0}\subset
\scF^{1}\subset
\scF^{2}=\bR^2, 
$$
where $\scF^k$ is a generic $k$-dimensional 
affine subspace. 

\begin{definition}
For $k=0, 1, 2$, define the subset 
$\ch^k_\scF(\scA)\subset\ch(\scA)$ by 
$$
\ch^k_\scF(\scA):=
\{C\in\ch(\scA)\mid C\cap\scF^k\neq\emptyset, 
C\cap\scF^{k-1}=\emptyset\}.
$$ 
\end{definition}
The set of chambers decomposes into a 
disjoint union as 
$\ch(\scA)=
\ch^0_\scF(\scA)\sqcup
\ch^1_\scF(\scA)\sqcup
\ch^2_\scF(\scA)$. The cardinality of 
$\ch^k_\scF(\scA)$ is equal to $b_k(\sfM(\A))$ 
for $k=0, 1, 2$.

We further assume that 
the generic flag $\scF$ satisfies the following 
conditions: 
\begin{itemize}
\item $\scF^1$ does not separate intersections of $\scA$, 
\item $\scF^0$ does not separate $n$-points 
$\scA\cap\scF^1$. 
\end{itemize}
Then we can choose coordinates $x_1, x_2$ so that 
$\scF^0$ is the origin $(0,0)$, 
$\scF^1$ is given by $x_2=0$, all intersections of $\scA$ 
are contained in the upper-half plane $\{(x_1, x_2)\in\bR^2\mid
x_2>0\}$ and $\scA\cap\scF^1$ is contained in the 
half-line $\{(x_1, 0)\mid x_1>0\}$. 

We set $H_i\cap\scF^1$ to have coordinates $(a_i, 0)$. 
By changing the numbering of the lines and the signs of 
the defining equation $\alpha_i$ of $H_i\in\scA$ 
we may assume that 
\begin{itemize}
\item $0<a_1<a_2<\dots<a_n$, 
\item the origin $\scF^0$ is contained in the negative 
half-plane $H_i^-=\{\alpha_i<0\}$. 
\end{itemize}
We set 
$\ch_0^\scF(\scA)=\{U_0\}$ and 
$\ch_1^\scF(\scA)=\{U_1, \dots, U_{n-1}, U_0^\lor\}$ so that 
$U_p\cap\scF^1$ is equal to the interval $(a_p, a_{p+1})$ for 
$p=1, \dots, n-1$. 
%(We use the convention $a_0=+\infty$.) 
It is easily seen that the 
chambers $U_0, U_1, \dots, U_{n-1}$ and $U_0^\lor$ 
have the following expression: 
\begin{equation}
\begin{split}
&U_0=\bigcap_{i=1}^n\{\alpha_i<0\},\\
&U_p=
\bigcap_{i=1}^{p}\{\alpha_i>0\}\cap
\bigcap_{i=p+1}^n\{\alpha_i<0\},\ (p=1, \dots, n-1),\\
&U_0^\lor=
\bigcap_{i=1}^{n}\{\alpha_i>0\}.
\end{split}
\end{equation}
The notations introduced to this point are illustrated 
in Figure \ref{fig:numbering}.

\begin{figure}[htbp]
\begin{picture}(100,100)(20,0)
\thicklines

\put(70,20){\circle*{4}}
\put(40,27){$\scF^0(0,0)$}

%F1
\multiput(50,20)(3,0){83}{\circle*{1}}
\put(300,20){\vector(1,0){0}}
\put(285,24){$\scF^1$}

%H5
\put(280,0){\line(-2,1){200}}
\put(240,20){\circle*{3}}
\put(243,24){$a_5$}
\put(278,-10){$H_5$}

%H4
\put(200,0){\line(0,1){100}}
\put(200,20){\circle*{3}}
\put(203,24){$a_4$}
\put(195,-10){$H_4$}

%H3
\put(170,0){\line(0,1){100}}
\put(170,20){\circle*{3}}
\put(173,24){$a_3$}
\put(165,-10){$H_3$}

%H2
\put(140,0){\line(0,1){100}}
\put(140,20){\circle*{3}}
\put(143,24){$a_2$}
\put(135,-10){$H_2$}

%H1
%\put(80,0){\line(1,1){100}}
\put(60,0){\line(2,1){200}}
\put(100,20){\circle*{3}}
\put(94,24){$a_1$}
\put(55,-10){$H_1$}

%C0
\put(118,52){$U_0$}
%C0^
\put(210,52){$U_0^\lor$}

%C1
\put(118,0){$U_1$}
%C2
\put(150,0){$U_2$}
%C3
\put(180,0){$U_3$}
%C4
\put(210,0){$U_4$}

%C4^
\put(118,90){$U_4^\lor$}
%C2^
\put(150,90){$U_2^\lor$}
%C3^
\put(180,90){$U_3^\lor$}
%C1^
\put(210,90){$U_1^\lor$}

%C5
\put(143,52){$C_1$}
%C6
\put(183,52){$C_2$}

%ch
\put(280,92){$\ch^0_\scF(\scA)=\{U_0\}$}
\put(280,75){$\ch^1_\scF(\scA)=\{U_0^\lor, U_1,\dots,U_4\}$}
\put(280,58){$\ch^2_\scF(\scA)=\{U_1^\lor,\dots,U_4^\lor,
C_1,C_2\}$}

\end{picture}
     \caption{Numbering of lines and chambers.}\label{fig:numbering}
\end{figure}
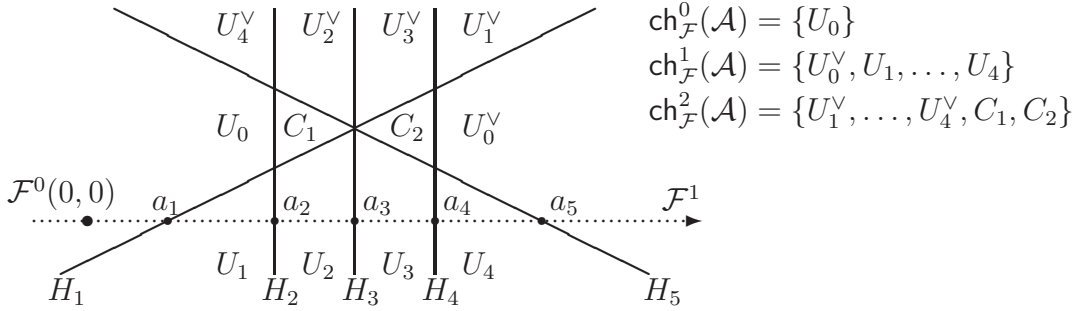

Let $\scL$ be a complex rank-one local system 
on $\sfM(\scA)$. The local system $\scL$ is determined by 
non-zero complex numbers (monodromy around $H_i$) 
$q_i\in\bC^*$, $i=1, \dots, n$. 
Fix a square root $q_i^{1/2}\in\bC^*$ for each $i$. 

\begin{definition}
\label{def:sep}
$(1)$ 
For $C, C'\in\ch(\A)$, let us denote by 
$\Sep(C,C')$ the set of lines $H_i\in\A$ which 
separate $C$ and $C'$. \\
$(2)$ 
Define the complex number $\Delta(C,C')\in\bC$ by 
$$
\Delta(C,C'):=
\prod_{H_i\in\Sep(C,C')}q_i^{1/2}
-
\prod_{H_i\in\Sep(C,C')}q_i^{-1/2}. 
$$
\end{definition}
Now we construct the cochain complex $(\bC[\ch^\bullet_\scF(\scA)], d_\scL)$. 
\begin{itemize}
\item[(i)] The map 
$d_\scL:\bC[\ch^0_\scF(\scA)]
\longrightarrow\bC[\ch^1_\scF(\scA)]$ is defined by 
$$
d_\scL([U_0])=
\Delta(U_0,U_0^\lor)[U_0^\lor]+\sum_{p=1}^{n-1}\Delta(U_0, U_p)[U_p]. 
$$
\item[(ii)] 
$d_\scL:\bC[\ch^1_\scF(\scA)]
\longrightarrow\bC[\ch^2_\scF(\scA)]$ is defined by 
\begin{equation*}
\begin{split}
d_\scL([U_p])&=
-\sum_{\substack{C\in\ch^2_\scF(\A) \\ \alpha_p(C)>0\\ \alpha_{p+1}(C)<0}}\Delta(U_p, C)[C]
+\sum_{\substack{C\in\ch^2_\scF(\A) \\ \alpha_p(C)<0\\ \alpha_{p+1}(C)>0}}\Delta(U_p, C)[C],\ (\mbox{for }p=1, \dots, n-1),
\\
d_\scL([U_0^\lor])&=
-\sum_{\alpha_{n}(C)>0}\Delta(U_0^\lor, C)[C]. 
\end{split}
\end{equation*}
\end{itemize}

\begin{example}
Let $\A=\{H_1, \dots, H_5\}$, and let the flag $\scF$ be as in Figure 
\ref{fig:numbering}. Then 
$$
d_\scL([U_0])=
([U_1], [U_2], [U_3], [U_4], [U_0^\lor])
\begin{pmatrix}
q_{1}^{1/2}-q_{1}^{-1/2}\\
q_{12}^{1/2}-q_{12}^{-1/2}\\
q_{123}^{1/2}-q_{123}^{-1/2}\\
q_{1234}^{1/2}-q_{1234}^{-1/2}\\
q_{12345}^{1/2}-q_{12345}^{-1/2}
\end{pmatrix}, 
$$
\begin{equation*}
\begin{split}
&d_\scL([U_1], [U_2], [U_3], [U_4], [U_0^\lor])
=
([U_1^\lor], [U_2^\lor], [U_3^\lor], [U_4^\lor], [C_1], [C_2])\\
&\times
\begin{pmatrix}
q_{12345}^{1/2}-q_{12345}^{-1/2}
&0
&0
&0
&-(q_{1}^{1/2}-q_{1}^{-1/2})\\ %U_1^\lor
q_{125}^{1/2}-q_{125}^{-1/2}
&-(q_{15}^{1/2}-q_{15}^{-1/2})
&0
&q_{1345}^{1/2}-q_{1345}^{-1/2}
&-(q_{134}^{1/2}-q_{134}^{-1/2})\\ %U_2^\lor
q_{1235}^{1/2}-q_{1235}^{-1/2}
&0
&-(q_{15}^{1/2}-q_{15}^{-1/2})
&q_{145}^{1/2}-q_{145}^{-1/2}
&-(q_{14}^{1/2}-q_{14}^{-1/2})\\ %U_3^\lor
0
&0
&0
&q_{12345}^{1/2}-q_{12345}^{-1/2}
&-(q_{1234}^{1/2}-q_{1234}^{-1/2})\\ %U_4^\lor
q_{12}^{1/2}-q_{12}^{-1/2}
&-(q_{1}^{1/2}-q_{1}^{-1/2})
&0
&0
&0\\ %C_1
0
&0
&-(q_{5}^{1/2}-q_{5}^{-1/2})
&q_{45}^{1/2}-q_{45}^{-1/2}
&-(q_{4}^{1/2}-q_{4}^{-1/2}) %C_2
\end{pmatrix}.
\end{split}
\end{equation*}
\end{example}

\begin{theorem}
\label{thm:twist}
Under the above notation, 
$(\bC[\ch^\bullet_\scF(\scA)], d_\scL)$ is a cochain complex and 
$$
H^k(\bC[\ch^\bullet_\scF(\scA)], d_\scL)
\simeq
H^k(M(\scA), \scL). 
$$
\end{theorem}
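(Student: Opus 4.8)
The plan is to realize the right-hand side as the cellular cochain complex, with $\scL$-coefficients, of an explicit minimal CW model of $M(\scA)$ adapted to the generic flag $\scF$, and then to identify the twisted incidence numbers of that model with the quantities $\Delta(C,C')$. First I would construct the minimal CW structure. Using $\scF$, consider the filtration $M^0\subset M^1\subset M^2=M(\scA)$ with $M^k=\scF^k\cap M(\scA)$. Since $\scF^1$ is a generic line missing all intersections of $\scA$, the space $M^1$ deformation retracts onto $\scF^1\setminus(\scA\cap\scF^1)$, a disc with $n$ punctures, hence onto a wedge whose $0$-cell is $U_0$ and whose $1$-cells are $U_1,\dots,U_{n-1},U_0^\lor$ — matching $\ch^0_\scF(\scA)$ and $\ch^1_\scF(\scA)$. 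By the Lefschetz-type hyperplane section theorem for arrangement complements, worked out explicitly in \cite{yos-lef}, passing from $M^1$ to $M(\scA)$ is effected by attaching $2$-cells, one for each chamber in $\ch^2_\scF(\scA)$; since the number of $k$-cells already equals $b_k(M(\scA))$ there is no cancellation, so the resulting complex $W$ is minimal, with $\pi_1$-generators the meridians around the $H_i$.

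Next, for an arbitrary rank-one local system $\scL$ the group $H^k(M(\scA),\scL)$ is computed by the $\scL$-twisted cellular cochain complex of $W$: its term in degree $k$ is $\bC[\ch^k_\scF(\scA)]$ and its differential sends $[\sigma]$ to $\sum_\tau[\tau:\sigma]_\scL\,[\tau]$, where $[\tau:\sigma]_\scL$ is the degree of the attaching map of $\tau$ along $\sigma$ read in the $\scL$-cover. Thus the theorem reduces to the identity $[\tau:\sigma]_\scL=\pm\Delta(\sigma,\tau)$, together with the vanishing of $[\tau:\sigma]_\scL$ whenever $\sigma$ and $\tau$ are not flag-adjacent in the sense dictated by the summation ranges in (i) and (ii).

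The core computation is therefore the explicit description of the attaching maps of $W$ and of their twisted degrees. For the $1$-cell $[U_p]$, its attaching loop runs along $\scF^1$ and encircles $H_i\cap\scF^1$ once for each $H_i\in\Sep(U_0,U_p)$; lifting to the $\scL$-cover and using the symmetric normalization $q_i^{1/2}$ produces precisely the factor $\Delta(U_0,U_p)$, the expression $q^{1/2}-q^{-1/2}$ being the class of a small loop around a puncture with monodromy $q$ after the half-integer basepoint convention. For a $2$-cell $[C]$ with $C\in\ch^2_\scF(\scA)$ one tracks the monodromy of the pencil of lines parallel to $\scF^1$ as it sweeps past those intersection points of $\scA$ lying between $C$ and the relevant $U_p$; the contribution along $[U_p]$ is again governed by $\Sep(U_p,C)$, and the sign is determined by whether $C$ lies on the $\{\alpha_p>0\}$ or the $\{\alpha_{p+1}>0\}$ side, reproducing exactly the matrices of the Example once the explicit forms of $U_0,U_p,U_0^\lor$ are inserted. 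That $d_\scL\circ d_\scL=0$ then follows from its being a genuine cellular cochain complex, or can be verified directly from the $\Delta$-identities.

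I expect the main obstacle to be precisely this last geometric step: pinning down the attaching maps finely enough to compute their twisted degrees, i.e. controlling the braid monodromy of the generic pencil in terms of the real picture and checking that all the contributions coming from chambers the attaching loop crosses transversally either cancel or assemble into the single factor $\Delta$. The genericity hypotheses on $\scF$ — that $\scF^1$ separates no intersection of $\scA$ and $\scF^0$ separates no point of $\scA\cap\scF^1$ — are exactly what keeps this combinatorics finite and the signs coherent, and the freedom in choosing the square roots $q_i^{1/2}$ changes the complex only by an isomorphism, so it does not affect the cohomology. This is the content carried out in \cite{yos-lef, yos-ch, yos-loc}, which I would invoke rather than redo.
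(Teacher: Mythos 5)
Your proposal is correct and takes essentially the same route as the paper: the paper gives no argument beyond citing \cite{yos-lef, yos-ch, yos-loc}, and your sketch accurately summarizes what those references establish (the Lefschetz-type minimal CW model indexed by $\ch^k_\scF(\scA)$ and the identification of the twisted incidence numbers with $\Delta(C,C')$) before likewise deferring to them for the details.
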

See \cite{yos-lef, yos-ch, yos-loc} for details.

\section{Main result}
\label{sec:main}

Let $\A=\{H_1, \dots, H_n\}$ be an arrangement of affine lines 
in $\R^2$. Let $\scF$ be a generic flag as in \S\ref{sec:twisted}. 
We will see that if $q_\infty\neq 1$, then we obtain a simpler algorithm 
computing $H^1(M(\A), \scL)$. 

\subsection{Resonant bands and standing waves}
\label{sec:resban}

\begin{definition}
\label{def:band}
A {\em band} $B$ is a region bounded by a pair of consecutive 
parallel lines $H_i$ and $H_{i+1}$. 
\end{definition}
Each band $B$ includes two unbounded chambers 
$U_1(B), U_2(B)\in\ch(\A)$. By definition, 
$U_1(B)$ and $U_2(B)$ are opposite each other, 
$U_1(B)^\lor=U_2(B)$ and $U_2(B)^\lor=U_1(B)$. 

\begin{definition}
A band $B$ is called {\em $\scL$-resonant} if 
$$
\Delta(U_1(B), U_2(B))=0. 
$$
We denote the set of all $\scL$-resonant bands 
by $\RB_\scL(\A)$. 
\end{definition}

Let us denote by $\overline{B}$ the closure of $B$ in 
the real projective plane $\R\bP^2$. Then the intersection 
$X(B):=\overline{B}\cap H_\infty$ is a single point. 
Each line $H\in\A\cup\{H_\infty\}$ 
either passes through $X(B)$ or 
separates $U_1(B)$ and $U_2(B)$. 

\begin{proposition}
\label{prop:resban}
A band $B$ is $\scL$-resonant if and only if 
$q_{X(B)}=1$. The directions of $\scL$-resonant 
bands are one-to-one correspondence with points 
$X\in H_\infty$ such that $q_X=1$. 
\end{proposition}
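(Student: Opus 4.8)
The plan is to unwind the definition of $\Delta(U_1(B), U_2(B))$ and identify the separating set $\Sep(U_1(B), U_2(B))$ geometrically, then connect this to the monodromy $q_{X(B)}$ around the point at infinity of the band.

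\textbf{Step 1: Identify the separating set.} First I would observe that since $U_1(B)$ and $U_2(B)$ are the two unbounded chambers lying in the band $B$, a line $H\in\A$ separates them if and only if $H$ is not one of the two boundary lines $H_i, H_{i+1}$ of $B$ and $H$ does not pass through the point $X(B)\in H_\infty$. Indeed, as remarked just before the proposition, every line $H\in\A\cup\{H_\infty\}$ either passes through $X(B)$ or separates $U_1(B)$ and $U_2(B)$; the lines $H_i, H_{i+1}$ bounding the band are parallel to the band's direction, hence pass through $X(B)$, and $H_\infty$ also passes through $X(B)$. So $\Sep(U_1(B), U_2(B)) = \A \setminus (c\A)_{X(B)}$, the set of affine lines \emph{not} passing through $X(B)$.

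\textbf{Step 2: Translate into the monodromy condition.} Taking the product $\prod_{H_i \in \Sep(U_1(B),U_2(B))} q_i$ over this set and using $q_\infty = (q_1\cdots q_n)^{-1}$, I would compute
$$
\prod_{H_i \in \Sep(U_1(B),U_2(B))} q_i
= \frac{\prod_{i=1}^n q_i}{\prod_{H \in (c\A)_{X(B)},\, H\neq H_\infty} q_H}
= \frac{1}{q_{X(B)}},
$$
where $q_{X(B)} = \prod_{H \in (c\A)_{X(B)}} q_H$ includes the factor $q_\infty$ precisely to cancel $\prod_{i=1}^n q_i$ in the numerator. Now $\Delta(U_1(B), U_2(B)) = t^{1/2} - t^{-1/2}$ where $t = \prod_{H_i\in\Sep} q_i = q_{X(B)}^{-1}$ (the ambiguity in square roots does not affect vanishing). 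Since $t^{1/2} - t^{-1/2} = 0 \iff t = 1$, we get $\Delta(U_1(B), U_2(B)) = 0 \iff q_{X(B)}^{-1} = 1 \iff q_{X(B)} = 1$, which is the first assertion.

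\textbf{Step 3: The correspondence of directions.} For the second statement, I would note that the direction of a band $B$ is exactly the data of the point $X(B)\in H_\infty$, and that bands correspond to directions in which $\A$ has at least two parallel lines (a \emph{consecutive} parallel pair, so each direction with $k\geq 2$ parallel lines contributes $k-1$ bands but only one direction/point $X$). By Step 2, such a band is $\scL$-resonant iff $q_X = 1$ for that point $X = X(B)$. Conversely, if $X\in H_\infty$ satisfies $q_X = 1$ with $q_X\neq q_\infty$'s being the only contribution — one must check $X$ is genuinely a point where multiple affine lines meet, i.e. that a resonant $X\in H_\infty$ forces at least two lines of $\A$ through $X$; this follows because if only one (or no) affine line passes through $X$, then $q_X = q_\infty q_i$ (or $q_\infty$), and one would need to handle this edge case. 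I expect the main subtlety to be exactly this bookkeeping at infinity: making precise which points $X\in H_\infty$ arise as $X(B)$ for some band versus which merely satisfy $q_X=1$ vacuously, and confirming the statement intends $X$ to range over directions of actual parallel pencils in $\A$. Once that matching is pinned down, the one-to-one correspondence is immediate from the equivalence in Step 2.
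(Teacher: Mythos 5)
Your proof is correct and follows essentially the same route as the paper: decompose $c\A$ as $\Sep(U_1(B),U_2(B))\sqcup(c\A)_{X(B)}$, use $\prod_{H\in c\A}q_H=1$ to get $\prod_{H\in\Sep(U_1(B),U_2(B))}q_H=q_{X(B)}^{-1}$, and conclude $\Delta(U_1(B),U_2(B))=0\iff q_{X(B)}=1$. The bookkeeping issue you flag in Step~3 --- points $X\in H_\infty$ with $q_X=1$ through which fewer than two affine lines pass --- is a genuine minor imprecision in the statement that the paper's own proof also leaves unaddressed; the intended reading is that $X$ ranges over multiple points of $c\A$ on $H_\infty$, under which your Step~2 equivalence immediately gives the bijection.
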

\begin{proof}
By the above remark, $c\A$ is decomposed as 
$$
c\A=\Sep(U_1(B), U_2(B))\sqcup (c\A)_{X(B)}, 
$$
and we have 
$$
q_{X(B)}\times\prod_{H_i\in\Sep(U_1, U_2)}q_i=
\prod_{H_i\in c\A}q_i=1. 
$$
Hence $q_{X(B)}=1$ if and only if 
$\prod_{H_i\in\Sep(U_1, U_2)}q_i=1$, which is equivalent 
to $\Delta(U_1(B), U_2(B))=0$. 
\end{proof}

%\begin{remark}
%\label{rem:mult}
%Let $\overline{B}$ be the closure of $B$ in 
%the real projective plane $\R\bP^2$. $\overline{B}$ 
%intersects $H_\infty$ in one point 
%$\overline{B}\cap H_\infty$. Each line $H\in\A\cup\{H_\infty\}$ 
%is either passing $\overline{B}\cap H_\infty$ or 
%separating $U_1(B)$ and $U_2(B)$. Therefore 
%the length of $B$ is equal to 
%$n+1-\mult(\overline{B}\cap H_\infty)$. 
%\end{remark}

To an $\scL$-resonant band $B\in\RB_\scL(\A)$ we can associate 
a {\em standing wave} 
$\nabla_i(B)\in\bC[\ch(\A)]$ $(i=1,2)$ on the band $B$ 
as follows: 
\begin{equation}
\label{eq:wave}
%\begin{split}
\nabla_i(B)=
\sum_{\substack{C\in\ch(\A),\\ C\subset B}}
\Delta(U_i(B), C)\cdot[C]. 
\end{equation}

\begin{proposition}
Let $B$ be a $\scL$-resonant band. Then 
$\nabla_2(B)=\pm\nabla_1(B)$. 
\end{proposition}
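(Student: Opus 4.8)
The plan is to prove the stronger, pointwise statement that there is a sign $\eps\in\{+1,-1\}$, depending only on the band $B$ (and on $\scL$), such that
\[
\Delta(U_2(B),C)\;=\;-\eps\,\Delta(U_1(B),C)\qquad\text{for every chamber }C\subset B.
\]
Since $\nabla_1(B)$ and $\nabla_2(B)$ are both supported on the finitely many chambers contained in $B$, multiplying this identity by $[C]$ and summing over all such $C$ yields at once $\nabla_2(B)=-\eps\,\nabla_1(B)=\pm\,\nabla_1(B)$, which is the assertion.

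The geometric input I would isolate first is a description of the relevant separating sets. Put $S:=\Sep(U_1(B),U_2(B))\subset\A$. As noted above, every line of $\A$ either passes through $X(B)$ --- in which case it is parallel to $H_i,H_{i+1}$, hence has the whole open band on one side and separates no two chambers contained in $B$ --- or separates $U_1(B)$ and $U_2(B)$, and the latter lines are exactly the elements of $S$. Now fix a chamber $C\subset B$. A line $H\in S$ has $U_1(B)$ and $U_2(B)$ on opposite sides of it and $C$ on one of the two sides, so $H$ separates $C$ from exactly one of $U_1(B),U_2(B)$; a line not in $S$ separates $C$ from neither. Hence
\[
\Sep(U_1(B),C)\;\sqcup\;\Sep(U_2(B),C)\;=\;S\qquad\text{(disjoint union),}
\]
so in particular $\Sep(U_2(B),C)=S\setminus\Sep(U_1(B),C)$.

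Next I would convert this into a multiplicative statement. Write $r(C,C'):=\prod_{H_i\in\Sep(C,C')}q_i^{1/2}$, so that $\Delta(C,C')=r(C,C')-r(C,C')^{-1}$. Since the fixed square roots $q_i^{1/2}$ simply multiply, the partition of $S$ above gives
\[
r(U_1(B),C)\cdot r(U_2(B),C)\;=\;\prod_{H_i\in S}q_i^{1/2}\;=\;r(U_1(B),U_2(B))\;=:\;\eps.
\]
By Proposition~\ref{prop:resban}, $B$ being $\scL$-resonant means $\prod_{H_i\in S}q_i=1$, so $\eps^{2}=1$; thus $\eps\in\{+1,-1\}$ and $\eps^{-1}=\eps$. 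Consequently $r(U_2(B),C)=\eps\,r(U_1(B),C)^{-1}$, and
\[
\Delta(U_2(B),C)=\eps\,r(U_1(B),C)^{-1}-\eps\,r(U_1(B),C)=-\eps\bigl(r(U_1(B),C)-r(U_1(B),C)^{-1}\bigr)=-\eps\,\Delta(U_1(B),C).
\]
As $\eps$ does not depend on $C$, this is precisely the pointwise identity above, and the proposition follows.

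The step that needs genuine care --- and the one I expect to be the main obstacle --- is the disjoint-union identity $\Sep(U_1(B),C)\sqcup\Sep(U_2(B),C)=S$: one must make sure that no line outside $S$ separates $C$ from either end of the band, which relies on convexity of $B$ together with the defining property of a band as the region between \emph{consecutive} parallel lines (so that no line of $\A$ is parallel to and strictly inside $B$). Everything after that point is a short formal manipulation.
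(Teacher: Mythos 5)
Your argument is correct and is essentially the paper's own proof: both rest on the decomposition $\Sep(U_1(B),C)\sqcup\Sep(U_2(B),C)=\Sep(U_1(B),U_2(B))$ for $C\subset B$, turn it into the multiplicative identity $r(U_1(B),C)\,r(U_2(B),C)=\eps$ with $\eps^2=1$ by resonance, and conclude $\Delta(U_2(B),C)=-\eps\,\Delta(U_1(B),C)$. The only cosmetic difference is that the paper phrases the decomposition projectively, splitting $c\A$ as $\Sep(U_1,C)\sqcup\Sep(U_2,C)\sqcup(c\A)_{X(B)}$ and taking $\eps=q_{X(B)}^{1/2}$, whereas you stay in the affine picture with $S=\Sep(U_1(B),U_2(B))$; the two signs agree up to the harmless ambiguity in the chosen square roots.
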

\begin{proof}
Since $B$ is $\scL$-resonant, 
$$
\varepsilon:=
q_{X(B)}^{1/2}=
\prod_{H_i\in(c\A)_{X(B)}}q_i^{1/2}
$$
is either $+1$ or $-1$. 
Let $C\subset B$ be a chamber contained in $B$. 
Then $c\A$ is decomposed as 
$
c\A=
\Sep(U_1(B), C)\sqcup
\Sep(U_2(B), C)\sqcup
(c\A)_{X(B)}, 
$
and so we have 
$$
\prod_{H_i\in\Sep(U_1,C)}q_i^{1/2}
\times
\prod_{H_i\in\Sep(U_2,C)}q_i^{1/2}
=\varepsilon. 
$$
Hence 
$$
\Delta(U_1(B),C)=-\varepsilon\cdot
\Delta(U_2(B),C). 
$$
We conclude that 
$\nabla_1(B)=-\varepsilon\cdot\nabla_2(B)$. 
\end{proof}
In the remainder of the paper 
we denote $\nabla(B):=\nabla_1(B)$ for simplicity. 

%\begin{remark}
%Since the length $d(U_1(B), U_2(B))$ of the band $B$ is divisible by $k$, 
%the coefficients of $[U_1(B)]$ and $[U_2(B)]$ in the linear combination 
%in (\ref{eq:wave}) are zero. Hence the chambers in the summations 
%of (\ref{eq:wave}) run only bounded chambers contained in $B$. 
%We also note that the exchange of $U_1(B)$ and $U_2(B)$ affects at most the 
%sign of $\nabla(B)$. 
%\end{remark}

\begin{remark}
To indicate the choice of $U_1(B)$ and $U_2(B)$, we always 
put the name $B$ of the band in the unbounded chamber 
$U_1(B)$ in figures. 
\end{remark}

\subsection{Cohomology via resonant bands}
\label{sec:eigensp}

The map $B\longmapsto\nabla(B)$ can be naturally extended to the linear map 
\begin{equation}
\label{eq:nabla}
\nabla:
\bC[\RB_\scL(\A)]\longrightarrow
\bC[\ch(\A)]. 
\end{equation}
%We can describe

\begin{theorem}
\label{thm:main}
Assume that $q_\infty\neq 1$. Then 
the kernel of $\nabla$ is isomorphic to 
the first cohomology of $\scL$-coefficients, that is, 
$$
\Ker\left(\nabla:
\bC[\RB_\scL(\A)]\longrightarrow
\bC[\ch(\A)]\right)\simeq 
H^1(M(\A), \scL).
$$
In particular, $\dim H^1(M(\A), \scL)$ is equal to the number of 
linear relations among the standing waves 
$\nabla(B)$, $B\in\RB_\scL(\A)$. 
\end{theorem}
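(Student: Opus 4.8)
The plan is to derive the statement from the twisted minimal complex $(\bC[\ch^\bullet_\scF(\A)],d_\scL)$ of Theorem~\ref{thm:twist} by a cascade of Gaussian eliminations that is available precisely because $q_\infty\neq1$, arriving at the two-term complex $\bC[\RB_\scL(\A)]\xrightarrow{\nabla}\bC[\ch(\A)]$. The first elimination uses the line at infinity. Since $\Sep(U_0,U_0^\lor)=\{H_1,\dots,H_n\}$, one has $\Delta(U_0,U_0^\lor)=q_{1\cdots n}^{1/2}-q_{1\cdots n}^{-1/2}$, which is nonzero exactly because $q_\infty=(q_1\cdots q_n)^{-1}\neq1$. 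Hence $d_\scL^0$ is injective and $\operatorname{im}d_\scL^0$ is a line with nonzero $[U_0^\lor]$-coordinate, so each class in $H^1$ has a unique representative cocycle with vanishing $[U_0^\lor]$-coordinate, giving
\[
H^1(M(\A),\scL)\;\cong\;\Ker\bigl(d_\scL^1\colon\bC[\{U_1,\dots,U_{n-1}\}]\longrightarrow\bC[\ch^2_\scF(\A)]\bigr),
\]
where $d_\scL^1$ is restricted to the coordinate subspace spanned by $[U_1],\dots,[U_{n-1}]$.

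The heart of the argument is a geometric identity for the columns $d_\scL^1([U_p])$ with $H_p\parallel H_{p+1}$. For two parallel lines, with our sign conventions, the only sign patterns for $(\alpha_p,\alpha_{p+1})$ that occur are $(-,-)$, $(+,-)$, $(+,+)$, so the region $\{\alpha_p<0,\ \alpha_{p+1}>0\}$ is empty; hence only one of the two sums defining $d_\scL^1([U_p])$ survives, and it ranges over the chambers contained in the band $B_p$ bounded by $H_p$ and $H_{p+1}$, whose other unbounded chamber is $U_p^\lor=U_2(B_p)$. Thus $d_\scL^1([U_p])=-\sum_{C\subset B_p}\Delta(U_p,C)[C]$ (the term $C=U_p$ has coefficient $\Delta(U_p,U_p)=0$, the remaining chambers of $B_p$ lying in $\ch^2_\scF(\A)$). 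When $B_p$ is $\scL$-resonant the $[U_p^\lor]$-term vanishes by resonance and the column equals $-\nabla(B_p)$; when $B_p$ is not resonant, or when $H_p\not\parallel H_{p+1}$, the coefficient of $[U_p^\lor]$ is $-\Delta(U_p,U_p^\lor)\neq0$ (by Proposition~\ref{prop:resban} in the first case; in the second because then $\Sep(U_p,U_p^\lor)=\{H_1,\dots,H_n\}$ and $q_\infty\neq1$). In parallel I would use the genericity of $\scF$ — no line of $\A$ meets $\scF^1$ strictly between the $\scF^1$-traces of two parallel lines consecutive in their parallel class, since such a line, being non-parallel to the strip yet meeting $H_p$ and $H_{p+1}$ only above $\scF^1$, would have to stay inside the strip for all $x_2\le0$, which is impossible — to see that $p\mapsto B_p$ is a bijection from $\{p:H_p\parallel H_{p+1}\}$ onto the set of all bands of $\A$, so that $[U_p]\leftrightarrow[B_p]$ identifies $\{p:B_p\in\RB_\scL(\A)\}$ with $\RB_\scL(\A)$.

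To conclude, I would eliminate one at a time the pairs $(U_p,U_p^\lor)$ for which $B_p$ is not an $\scL$-resonant band, each pivot being $-\Delta(U_p,U_p^\lor)\neq0$. The columns $d_\scL^1([U_p])=-\nabla(B_p)$ with $B_p\in\RB_\scL(\A)$ are untouched by these operations, since $\nabla(B_p)$ is supported on the bounded chambers of $B_p$ and so has vanishing $[U_{p'}^\lor]$-coordinate for all $p'$; equivalently, in the block form of $d_\scL^1$ the block from the resonant-band columns to the $U_{p'}^\lor$-rows is zero. What remains is the two-term complex $\bC[\{U_p:B_p\in\RB_\scL(\A)\}]\xrightarrow{\,[U_p]\mapsto-\nabla(B_p)\,}\bC[\{\text{bounded chambers}\}]$, whose kernel is $\Ker\nabla$. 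As every elimination is a quasi-isomorphism, $H^1(M(\A),\scL)\cong\Ker\nabla$, and the count of linear relations among the $\nabla(B)$, $B\in\RB_\scL(\A)$, follows. The step I expect to be the main obstacle is justifying that this cascade is legitimate — equivalently, that the square block of $d_\scL^1$ indexed by the non-resonant-band chambers $U_p$ and their opposites $U_p^\lor$ is invertible, or that the associated Morse matching is acyclic. I would handle this by ordering those chambers along $\scF^1$ and showing the block is triangular, or else by invoking the explicit minimal cell structures of \cite{yos-lef,yos-ch,yos-loc}.
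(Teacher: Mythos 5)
Your proposal is correct and follows essentially the same route as the paper: reduce to $\Ker(d_\scL|_V)$ on $V=\bigoplus_{p=1}^{n-1}\bC\cdot[U_p]$ using $q_\infty\neq 1$, then kill the coordinates of the $U_p$ whose band is absent or non-resonant via the nonzero pivots $\Delta(U_p,U_p^\lor)$, and identify what remains with $\nabla$. The block whose invertibility you flag as the main remaining obstacle is in fact diagonal, not merely triangular: since $U_{p'}^\lor=\bigcap_{k\le p'}\{\alpha_k<0\}\cap\bigcap_{k>p'}\{\alpha_k>0\}$, the signs of $\alpha_p$ and $\alpha_{p+1}$ on $U_{p'}^\lor$ differ only when $p=p'$, so $[U_{p'}^\lor]$ occurs in $d_\scL([U_p])$ only for $p'=p$, with coefficient $\pm\Delta(U_p,U_p^\lor)\neq 0$ in the non-resonant case --- which is exactly the observation the paper's proof uses directly.
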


\begin{proof}
%Let $\scL_\lambda$ be the rank one local system on $\sfM(\A)$ 
%defined by $q_1=\cdots=q_n=\lambda\in\bC^*$ 
%(see \S\ref{sec:milnor} and \S\ref{sec:twisted}). In this case, 
%$\Delta(C, C')$ depends only on the adjacency distance 
%$d(C, C')$, more precisely, 
%$$
%\Delta(C, C')=
%\lambda^{\frac{d(C, C')}{2}}-
%\lambda^{-\frac{d(C, C')}{2}}. 
%$$

%Now 
We consider the first cohomology group 
$H^1(\bC[\ch^\bullet_\scF(\A)], d_\scL)$ of the twisted minimal 
cochain complex. The image $d_\scL:\bC[\ch^0_\scF(\A)]
\longrightarrow \bC[\ch^1_\scF(\A)]$ is generated by 
$$
d_\scL([U_0])=
\sum_{p=1}^{n-1}
\Delta(U_0, U_p)
%(
%\lambda^{\frac{p}{2}}-
%\lambda^{-\frac{p}{2}}
%)
[U_p] 
\pm
(q_\infty^{1/2}-q_\infty^{-1/2})
%+
%\left(
%\lambda^{\frac{n}{2}}-
%\lambda^{-\frac{n}{2}}
%\right)
[U_0^\lor]. 
$$
(The sign depends on $q_{12\dots n\infty}^{1/2}=\pm 1$.) 
Since $q_\infty\neq 1$, 
%$\lambda=e^{2\pi i/k}$ with $k>1$ and $k|(n+1)$, we have 
%$\lambda^{\frac{n}{2}}-\lambda^{-\frac{n}{2}}=
%\lambda^{-\frac{n}{2}}(\lambda^n-1)\neq 0$. Thus 
the coefficient of $[U_0^\lor]$ in $d_\scL([U_0])$ is non-zero. 
Define the subspace $V$ of $\bC[\ch^1_\scF(\A)]$ by 
\begin{equation}
\begin{split}
V&=\bigoplus_{p=1}^{n-1}\bC\cdot[U_p]\\
(&\simeq \Coker
\left(d_\scL:\bC[\ch^0_\scF(\A)]
\longrightarrow \bC[\ch^1_\scF(\A)]\right)). 
\end{split}
\end{equation}
Then $H^1(\bC[\ch^\bullet_\scF(\A)], d_\scL)$ is isomorphic to 
$\Ker\left(d_\scL|_V:V\longrightarrow\bC[\ch^2_\scF(\A)]\right)$. 
It is sufficient to show that 
$\Ker (d_\scL|_V)\simeq \Ker\nabla$, which will be done in several steps. 
%$\Ker\left(\nabla:\bC[\RB_k(\A)]\longrightarrow\bC[\ch(\A)]\right)$. 
Suppose that 
$\varphi=\sum_{p=1}^{n-1}c_p\cdot[U_p]\in \Ker(d_\scL|_V)$. 

\begin{itemize}
\item[(i)] 
If $H_i$ and $H_{i+1}$ are not parallel, then $c_i=0$. 
\end{itemize}
Note that if $j\neq i$, then the chamber $[U_i^\lor]$ does not appear 
in $d_\scL([U_{j}])$. Thus the coefficient of $[U_i^\lor]$ in 
$$
d_\scL(\varphi)=
\sum_{p=1}^{n-1}c_p\cdot d_\scL([U_p])
$$
is $c_i\cdot\Delta(U_i, U_i^\lor)=
\pm c_i(q_\infty^{1/2}-q_\infty^{-1/2})$. This equals zero if and only if 
$c_i=0$. 

Now we may assume that 
$\varphi=\sum_{p}c_p\cdot[U_p]\in\Ker(d_\scL)$ is a linear combination 
of $[U_p]$s such that $H_p$ and $H_{p+1}$ are parallel. 
Suppose that $H_i$ and $H_{i+1}$ are parallel and denote by  
$B_i$ the band determined by these lines. 
\begin{itemize}
\item[(ii)] 
If $B_i$ is not $\scL$-resonant, then $c_i=0$. 
\end{itemize}
In this case, $\Delta(U_i, U_i^\lor)\neq 0$. 
Since 
$\varphi$ is a linear combination of $[U_p]$s with 
parallel boundaries $H_p$ and $H_{p+1}$, the term $[U_i^\lor]$ 
appears only in $d_\scL([U_i])$, which is equal to 
$c_i\cdot\Delta(U_i, U_i^\lor)[U_i^\lor]$. Therefore $c_i=0$. 

Finally we may assume that $\varphi$ is a linear combination of 
$[U_p]$s such that the boundaries $H_p$ and $H_{p+1}$ are parallel and 
the length of the corresponding band $B_p$ is divisible by $k$. In this case, 
it is straightforward to check that the maps $d_\scL$ and $\nabla$ are 
identical. This completes the proof. 
\end{proof}

\begin{example}
Let $\A=\{H_1, H_2, H_3, H_4, H_5\}$ be the line arrangement in 
Figure \ref{fig:numbering}. There are two bands: 
\begin{equation*}
\begin{split}
B_1=&\mbox{(The band bounded by $H_2$ and $H_3$),}\\
B_2=&\mbox{(The band bounded by $H_3$ and $H_4$).}
\end{split}
\end{equation*}
We set $U_1(B_1)=U_2, 
U_2(B_1)=U_2^\lor, 
U_1(B_2)=U_3$ and $
U_2(B_2)=U_3^\lor$. 
The band $B_i$ is $\scL_{\bm{q}}$-resonant if and only if 
$q_{15}=1$, or equivalently $q_{234\infty}=1$. 
Then we have 
\begin{equation*}
\begin{split}
\nabla(B_1)=&\Delta(U_2, C_1)\cdot [C_1]=(q_1^{1/2}-q_1^{-1/2}) \cdot [C_1]\\
\nabla(B_2)=&\Delta(U_3, C_2)\cdot [C_2]=(q_5^{1/2}-q_5^{-1/2}) \cdot [C_2]
=\pm (q_1^{1/2}-q_1^{-1/2}) \cdot [C_2].
\end{split}
\end{equation*}
Hence, 
\begin{equation*}
\dim H^1(\scL_{\bm{q}})=
\left\{
\begin{array}{cc}
2&\ q_1=q_5=1\\
0&\ \mbox{otherwise}. 
\end{array}
\right.
\end{equation*}
This example is a special case of Theorem \ref{thm:1pt}. 
\end{example}

\subsection{Vanishing}
\label{sec:vanish}

We describe some corollaries to Theorem \ref{thm:main}. 

\begin{corollary}
\label{cor:empty}
Suppose that $q_\infty\neq 1$. 
If every multiple point $X\in H_\infty$ satisfies 
$q_X\neq 1$, then $H^1(M(\A),\scL)=0$. 
\end{corollary}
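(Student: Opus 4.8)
The plan is to deduce Corollary~\ref{cor:empty} directly from Theorem~\ref{thm:main} by showing that under the stated hypothesis the space $\bC[\RB_\scL(\A)]$ is itself zero, so that its subspace $\Ker\nabla$ is trivially zero and hence $H^1(M(\A),\scL)=0$. The key link is Proposition~\ref{prop:resban}: an $\scL$-resonant band $B$ corresponds to a point $X(B)\in H_\infty$ with $q_{X(B)}=1$, and $X(B)$ is the common point at infinity of the two parallel lines $H_i,H_{i+1}$ bounding $B$. So to rule out resonant bands I need to rule out points $X\in H_\infty$ with $q_X=1$ through which a band's direction can pass.

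First I would observe that if $B$ is a band bounded by the parallel lines $H_i$ and $H_{i+1}$, then the point $X(B)\in H_\infty$ lies on both $H_i$ and $H_{i+1}$; consequently at least two affine lines of $\A$ pass through $X(B)$, and together with $H_\infty$ this gives at least three lines of $c\A$ through $X(B)$. Hence $X(B)$ is a multiple point of $c\A$ lying on $H_\infty$, in the sense of the Definition following Proposition~\ref{prop:resban} (a multiple point on $H_\infty$ is exactly a point at infinity shared by at least two affine lines). Next, by Proposition~\ref{prop:resban}, $B$ being $\scL$-resonant means $q_{X(B)}=1$. But the hypothesis says every multiple point $X\in H_\infty$ has $q_X\neq 1$; applying this to $X=X(B)$ gives a contradiction. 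Therefore $\RB_\scL(\A)=\emptyset$.

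Finally, with $\RB_\scL(\A)=\emptyset$ we have $\bC[\RB_\scL(\A)]=0$, so $\nabla$ is the zero map out of the zero space and $\Ker\nabla=0$. Since $q_\infty\neq 1$ by assumption, Theorem~\ref{thm:main} applies and yields $H^1(M(\A),\scL)\simeq\Ker\nabla=0$, as desired.

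I do not expect a genuine obstacle here: the corollary is essentially a translation of ``no resonant bands'' via the dictionary already established. The only point requiring a little care is the bookkeeping about what counts as a ``multiple point'' at infinity — namely confirming that two parallel affine lines plus $H_\infty$ really do constitute a multiple point of $c\A$ in the sense used in the hypothesis, so that the hypothesis is applicable to $X(B)$. Once that identification is made explicit, the rest is immediate.
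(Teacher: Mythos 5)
Your proposal is correct and is essentially identical to the paper's own proof: both deduce from Proposition~\ref{prop:resban} that the hypothesis forces $\RB_\scL(\A)=\emptyset$, hence $\Ker\nabla=0$, and then apply Theorem~\ref{thm:main}. Your extra remark that $X(B)$ is indeed a multiple point (two parallel affine lines plus $H_\infty$ pass through it) is a worthwhile clarification of a step the paper leaves implicit, but it does not change the argument.
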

\begin{proof}
By Proposition \ref{prop:resban}, the assumption is equivalent to 
$\RB_\scL(\A)=\emptyset$. 
Since $\bC[\RB_k(\A)]=0$, obviously 
$\Ker(\nabla:\bC[\RB_k(\A)]\rightarrow\bC[\ch(\A)])=0$. 
By Theorem \ref{thm:main}, $H^1(M(\A),\scL)=0$. 
\end{proof}

\begin{corollary}
\label{cor:cdo}
If there exists a non-resonant line $H_i\in c\A$ such that 
every multiple point on $H_i$ is non-resonant, 
then $H^1(M(\A),\scL)=0$. 
\end{corollary}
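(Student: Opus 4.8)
The plan is to deduce this from Corollary~\ref{cor:empty} by moving the distinguished non-resonant line to infinity. The key observation is that the complement $\sfM(\A)=\bP_\bC^2\setminus\bigcup_{H\in c\A}H$, and the rank-one local system $\scL$ on it, depend only on the projective arrangement $c\A$ together with the monodromy data $(q_H)_{H\in c\A}$ subject to $\prod_{H\in c\A}q_H=1$, and this data is symmetric under permutations of the lines of $c\A$. Hence, for any line $H\in c\A$, choosing projective coordinates in which $H$ becomes the line at infinity produces an affine arrangement $\A'$ with $\sfM(\A')=\sfM(\A)$, a local system $\scL'$ with $H^k(\sfM(\A'),\scL')\cong H^k(\sfM(\A),\scL)$ for all $k$, new monodromy at infinity $q'_\infty=q_H$, and the property that the multiple points of $c\A$ lying on $H$ are exactly the multiple points on the new line at infinity (two affine lines of $\A'$ being parallel precisely when they meet $H$ at a common point).

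First I would apply this construction with $H:=H_i$, the hypothesized non-resonant line; if $H_i=H_\infty$ already, no change of coordinates is needed. In either case the new line at infinity is non-resonant, so $q'_\infty=q_{H_i}\neq 1$, and by hypothesis every multiple point on it is non-resonant (this is vacuous if there are none, in which case $\A'$ simply has no pair of parallel lines). Corollary~\ref{cor:empty} then gives $H^1(\sfM(\A'),\scL')=0$, hence $H^1(\sfM(\A),\scL)=0$.

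Before invoking Corollary~\ref{cor:empty} one should check that $\A'$ meets the running hypothesis of \S\ref{sec:pre}, namely that it has at least one intersection point, equivalently that $c\A$ is not a pencil. But if every line of $c\A$ passed through a common point $p$, then $p$ would be a multiple point (since $\A$ has an intersection, $c\A$ has at least three lines), it would lie on $H_i$, and it would satisfy $q_p=\prod_{H\in c\A}q_H=1$; this contradicts the assumption that every multiple point on $H_i$ is non-resonant. So the degenerate case does not occur.

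I do not expect a serious obstacle: the whole argument is the observation that Corollary~\ref{cor:empty} is independent of which line of $c\A$ we single out as $H_\infty$, together with the routine verification that the projective change of coordinates preserves the complement, the local system, and its cohomology. The only point requiring a moment's care is the excluded-pencil check above, which is needed so that the hypotheses under which Theorem~\ref{thm:main} (and hence Corollary~\ref{cor:empty}) were established remain valid for $\A'$.
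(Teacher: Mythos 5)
Your proof is correct and is exactly the argument the paper intends: the paper states Corollary~\ref{cor:cdo} without proof, but the strategy of applying a real projective change of coordinates to move the distinguished non-resonant line to infinity and then invoking Corollary~\ref{cor:empty} is precisely the technique the author uses elsewhere (e.g.\ in the proof of Theorem~\ref{thm:sharp} and in the computation scheme of \S\ref{sec:ex}). Your extra check that $c\A$ is not a pencil is sound, though it already follows from the standing assumption that $\A$ has at least one intersection.
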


\begin{remark}
Corollary \ref{cor:cdo} 
is proved by Cohen, Dimca and Orlik \cite{cdo} for 
more general complex arrangement cases. 
It is also proved for real line arrangements that the assumption of 
Corollary \ref{cor:cdo} is 
equivalent to $H^2(M(\A), \scL)$ being generated 
by bounded chambers \cite{yos-loc}. 
\end{remark}

For real case, we obtain a modified version. 

\begin{theorem}
\label{thm:1pt}
Suppose that $q_\infty\neq 1$ and $H_\infty$ has a unique 
resonant multiple point 
$X$. Let 
$\scB=c\A\setminus(c\A)_X$ be the set of lines which are not 
passing through $X$. Then 
\begin{equation}
\dim H^1(M(\A), \scL)=
\left\{
\begin{array}{cl}
|(c\A)_X|-2,&\mbox{if }q_H=1, \forall H\in\scB\\
0&\mbox{otherwise.}
\end{array}
\right.
\end{equation}
\end{theorem}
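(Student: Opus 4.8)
The plan is to deduce everything from Theorem~\ref{thm:main}. Since $q_\infty\neq 1$, that theorem identifies $\dim H^1(M(\A),\scL)$ with $\dim\Ker\bigl(\nabla\colon\bC[\RB_\scL(\A)]\to\bC[\ch(\A)]\bigr)$, so the first task is to pin down $\RB_\scL(\A)$. Write $d$ for the direction corresponding to $X$ and let $L_1,\dots,L_m$ be the lines of $\A$ parallel to $d$, listed in order, so that $(c\A)_X=\{L_1,\dots,L_m,H_\infty\}$ and $m=|(c\A)_X|-1$. Let $B_1,\dots,B_{m-1}$ be the bands bounded by the consecutive pairs $(L_j,L_{j+1})$. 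Each has $X(B_j)=X$, so by Proposition~\ref{prop:resban} together with $q_X=1$ every $B_j$ is $\scL$-resonant. Conversely, a band in a direction $d'\neq d$ has at least two lines of $\A$ parallel to it, hence its point at infinity lies on at least three lines of $c\A$; were that band $\scL$-resonant, this point would be a resonant multiple point on $H_\infty$ different from $X$, contradicting the hypothesis. Hence $\RB_\scL(\A)=\{B_1,\dots,B_{m-1}\}$, which has $|(c\A)_X|-2$ elements.

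I would then record two structural facts. First, the standing waves have pairwise disjoint supports: $\nabla(B_j)$ is a linear combination of chambers contained in the open strip $B_j$, and the strips $B_1,\dots,B_{m-1}$ are disjoint; since nonzero vectors with disjoint supports are linearly independent, $\dim\Ker\nabla=\#\{\,j:\nabla(B_j)=0\,\}$. Second, for every $j$ the set $\scB$ equals the set of affine lines meeting the open strip $B_j$: on one hand, for a chamber $C\subset B_j$ any line in $\Sep(U_1(B_j),C)$ meets the closed strip and cannot be one of $L_1,\dots,L_m$ (those either bound the strip or lie entirely on one side of it, so cannot separate two chambers inside it), hence lies in $\scB$; on the other hand $\scB\neq\emptyset$ as $\A$ has an intersection, and any line not parallel to $d$ meets every $L_k$ and therefore crosses every strip $B_j$. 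In particular $\scB=\Sep(U_1(B_j),U_2(B_j))$, so each $H\in\scB$ has $U_1(B_j)$ strictly on one side.

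It then suffices to prove, for one fixed $j$, the equivalence that $\nabla(B_j)=0$ if and only if $q_H=1$ for all $H\in\scB$; combined with the preceding paragraph this yields the two cases of the theorem at once (either all $\nabla(B_j)$ vanish, giving $\dim H^1=|\RB_\scL(\A)|=|(c\A)_X|-2$, or none do, giving $\dim H^1=0$). Recall $\Delta(C,C')=0$ exactly when $\prod_{H_i\in\Sep(C,C')}q_i=1$. For the ``if'' direction: if every line of $\scB$ is resonant, then $\prod_{H_i\in\Sep(U_1(B_j),C)}q_i=1$ for each chamber $C\subset B_j$, so every coefficient $\Delta(U_1(B_j),C)$ vanishes and $\nabla(B_j)=0$. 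For the ``only if'' direction: assume $\nabla(B_j)=0$, i.e.\ $\prod_{H_i\in\Sep(U_1(B_j),C)}q_i=1$ for all $C\subset B_j$; given $H\in\scB$, pick a point of $H$ lying in the open strip $B_j$ and on no other line of $\A$, obtaining two chambers $C,C'\subset B_j$ adjacent across $H$, with $U_1(B_j)$ on the same side of $H$ as $C$, so that $\Sep(U_1(B_j),C')=\Sep(U_1(B_j),C)\sqcup\{H\}$; dividing the two products gives $q_H=1$. The main obstacle is precisely this last step: propagating resonance from the lines that border $U_1(B_j)$ to \emph{every} line crossing the strip. The adjacent-chamber/symmetric-difference argument above does this uniformly, avoiding any need to linearly order the lines of $\scB$ inside the strip, which in general cannot be done.
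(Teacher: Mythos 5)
Your proof is correct and follows essentially the same route as the paper: identify $\RB_\scL(\A)$ as the $|(c\A)_X|-2$ parallel bands in the direction of $X$, use the disjointness of the supports of the standing waves to reduce $\Ker\nabla$ to counting which $\nabla(B_j)$ vanish, and show that vanishing is equivalent to $q_H=1$ for all $H\in\scB$. The only difference is one of detail: the paper merely asserts this last equivalence, while you justify it (in the ``only if'' direction) with the adjacent-chamber argument $\Sep(U_1(B_j),C')=\Sep(U_1(B_j),C)\sqcup\{H\}$, which is a worthwhile addition but not a different method.
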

\begin{proof}
By the assumption, 
$\RB_\scL(\A)=\{B_1, \dots, B_m\}$ consists of parallel bands, 
where $m=|(c\A)_X|-2$. 
Now the supports of $\nabla(B_1), \dots, \nabla(B_m)$, that is, 
the set of chambers appearing in each standing wave, 
are mutually disjoint. Therefore, 
$\nabla(B_1), \dots, \nabla(B_m)$ are linearly dependent 
if and only if 
$\nabla(B_1)=\dots=\nabla(B_m)=0$, which is equivalent to 
that $q_H=1$ for all $H\in\scB$. In this case, we have that 
the space $\Ker(\nabla)=\bC[\RB_\scL(\A)]$ is 
$(|(c\A)_X|-2)$-dimensional. 
\end{proof}

\begin{corollary}
\label{cor:1pt}
Suppose that there exists a line $H_i\in c\A$ such that 
$q_i\neq 1$ and there is at most one point $X\in H_i$ satisfying 
$q_X=1$. Then $\dim H^1(M(\A), \scL)$ is combinatorially determined. 
\end{corollary}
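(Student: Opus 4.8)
The plan is to dispose of the corollary by reducing to the two situations already settled—Corollary~\ref{cor:cdo} and Theorem~\ref{thm:1pt}—and then to check that in each of them $\dim H^1(M(\A),\scL)$ is given by a formula referring only to the intersection lattice of $c\A$ and to the monodromy data $\bm q$, with no metric or real‑positional input. I would split the argument according to the nature of the (at most one) point $X\in H_i$ with $q_X=1$: either there is no such point, or $X$ is an ordinary double point, or $X$ is a multiple point. In the first two cases every multiple point lying on $H_i$ is non‑resonant while $q_i\neq 1$, so Corollary~\ref{cor:cdo} applies directly and gives $H^1(M(\A),\scL)=0$; this value is constant and hence visibly combinatorial.

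There remains the case in which $X$ is a multiple point. Here I would choose $H_i$ as the line at infinity: since $M(\A)$ and $\scL$ depend only on $c\A$ as a (real) projective arrangement, this presents the pair $(M(\A),\scL)$ as the one attached to the affine real arrangement $\A_i=\{H\cap(\R\bP^2\setminus H_i):H\in c\A,\ H\neq H_i\}$, now with $q_\infty=q_i\neq 1$, and it changes neither the intersection lattice nor the set of resonant flats. (This change of chart is available unless all lines of $c\A$ other than $H_i$ are concurrent on $H_i$, i.e.\ $c\A$ is a pencil; in that degenerate case $M(\A)$ is a product of punctured lines and $H^1$ is computed directly and combinatorially, so I may assume $c\A$ is not a pencil.) After this reduction $H_\infty$ carries a unique resonant multiple point $X$, so Theorem~\ref{thm:1pt} applies and yields
\[
\dim H^1(M(\A),\scL)=
\begin{cases}
|(c\A)_X|-2,&\text{if }q_H=1\text{ for every }H\in\scB,\\
0,&\text{otherwise},
\end{cases}
\]
where $\scB=c\A\setminus(c\A)_X$.

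It then only remains to check that the displayed formula is combinatorial in the intended sense: the integer $|(c\A)_X|$ is the multiplicity of $X$ as a flat of $c\A$, the point $X$ is singled out by the lattice together with the resonance pattern $Y\mapsto[q_Y=1]$, and the case distinction involves nothing beyond the monodromies $q_H$. Hence the whole expression depends only on the intersection lattice of $c\A$ and on $\bm q$—so two arrangements with isomorphic lattices and corresponding monodromy data have equal $\dim H^1$—whereas the general criterion of Theorem~\ref{thm:main}, which counts linear relations among the standing waves $\nabla(B)$, a priori sees the chamber and separation structure of the real figure. I expect the only delicate point to be the change of chart together with the pencil degeneracy; once $H_i$ has been moved to infinity, the conclusion is an immediate consequence of Corollary~\ref{cor:cdo} and Theorem~\ref{thm:1pt}.
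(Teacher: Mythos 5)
Your argument is correct and is essentially the paper's intended one: the corollary is stated as an immediate consequence of Corollary~\ref{cor:cdo} (no resonant multiple point on $H_i$) and Theorem~\ref{thm:1pt} (exactly one, after moving $H_i$ to infinity), and both outputs are visibly lattice-theoretic. Your extra care with the degenerate near-pencil case, where sending $H_i$ to infinity makes all remaining lines parallel, is a reasonable precaution but does not change the substance.
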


\begin{remark}
We do not know whether Theorem \ref{thm:1pt} holds for 
complex arrangements. 
%We will prove a stronger result in 
%\S\ref{sec:A3}. 
\end{remark}

\subsection{Upper-bound}
\label{subsec:upper}

Recall that two lines $H, H'$ in the real projective plane $\R\bP^2$ 
divide the space into two regions. 

\begin{definition}
\label{def:sharp}
Let $c\A$ be a line arrangement in the real projective plane $\R\bP^2$. 
%Let $\scL$ be a rank one local system as above. 
%Then the pair of non-resonant 
A pair of non-resonant lines $H_i$ and $H_j\in c\A$ 
(that is, they satisfy $q_i\neq 1$ and $q_j\neq 1$) 
is said to be a {\em sharp pair} if 
all intersection points of $c\A\setminus 
\{H_i, H_j\}$ are contained in one of two regions or lie 
on $H_i\cup H_j$. (In other words, there are no intersection points 
in one of the two regions determined by $H_i$ and $H_j$.) 
%of $\A\setminus \{H_i, H_j\}$ in the interior of the other region.) 
\end{definition}

\begin{example}
In Figure \ref{fig:A(12,2)}, the lines $H_1$ and $H_\infty$ form 
a sharp pair (since the left half plane of $H_1$ does not contain 
intersections). 
\end{example}

%\begin{example}
%In the Pappus arrangement (Figure \ref{fig:pappus}), 
%the line at infinity and the leftmost vertical line form a 
%sharp pair. So do two boundary lines of the band $B_1$. 
%Furthermore, all line arrangements appearing in this 
%paper contain sharp pairs of lines. 
%\end{example}

If there exists a non-resonant line $H\in c\A$ such that $H$ has at most one 
resonant multiple point, then $\dim H^1(M(\A), \scL)$ is computed 
combinatorially (Corollary \ref{cor:1pt}). 
Thus we may assume that every non-resonant line $H\in c\A$ has 
at least two resonant multiple points. 

\begin{theorem}
\label{thm:sharp}
Let $\A$ be a line arrangement and $\scL$ be a rank-one local system. 
Assume that every non-resonant line $H\in c\A$ has at least 
two resonant points. Suppose that 
the arrangement $c\A$ contains a sharp pair of non-resonant lines. 
Then: 
\begin{itemize}
\item[(i)] 
$\dim H^1(M(\A), \scL)\leq 1$. 
\item[(ii)] 
Suppose that the lines $H_1, H_2\in c\A$ are non-resonant 
and form a sharp pair. Let $X=H_1\cap H_2$ 
be the intersection. If $(c\A)_X=\{H_1, H_2\}$ or $q_X\neq 1$, 
%the multiplicity of $c\A$ at $p$ is not 
%divisible by $k$, 
then $H^1(M(\A), \scL)=0$. 
\end{itemize}
\end{theorem}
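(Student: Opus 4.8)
The plan is to transfer the problem to the resonant-band complex of Theorem~\ref{thm:main} and then argue at the sharp-pair line $H_1$. First I would apply a real projective transformation sending $H_2$ onto the line at infinity; this changes neither $\sfM(\A)=\bP_\bC^2\setminus\{Q_{c\A}=0\}$ nor anything intrinsic, and now $H_\infty=H_2$ is non-resonant, so $q_\infty=q_2\neq1$ and Theorem~\ref{thm:main} identifies $H^1(M(\A),\scL)$ with $\Ker\big(\nabla\colon\bC[\RB_\scL(\A)]\to\bC[\ch(\A)]\big)$. Writing $X_1:=\overline{H_1}\cap H_\infty$, the sharp-pair hypothesis now says that one of the two open half-planes bounded by $H_1$, call it $H_1^-$, contains no intersection point of $\A\smallsetminus\{H_1\}$.

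Next I would extract the local structure this forces. Since pairwise non-crossing lines together with $H_1$ cannot enclose a bounded region, no bounded chamber of $\A$ lies in $H_1^-$. More importantly, for every resonant band $B$ with direction $X(B)\neq X_1$: the line $H_1$ crosses $B$; the half-strip $B\cap H_1^-$ contains no intersection, so it is a single unbounded chamber, which I take to be $U_2(B)$; and the segment $B\cap H_1$ is crossed by no other line of $\A$ --- otherwise that line would meet a wall of $B$ at a point of $H_1^-$, contradicting sharpness. Hence there is a unique chamber $F_B\subset B$ lying across $H_1$ from $U_2(B)$, with $\Sep(U_2(B),F_B)=\{H_1\}$. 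Because $B$ is resonant we have $\prod_{H\in\Sep(U_1(B),U_2(B))}q_H=1$, and $F_B$ lies on a shortest chamber path from $U_1(B)$ to $U_2(B)$, so $\Sep(U_1(B),F_B)=\Sep(U_1(B),U_2(B))\smallsetminus\{H_1\}$; therefore the coefficient of $[F_B]$ in $\nabla(B)$ is $\pm(q_1^{1/2}-q_1^{-1/2})$, which is non-zero since $H_1$ is non-resonant.

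The crucial point is then that $B\mapsto F_B$ is injective on resonant bands of direction $\neq X_1$: if $F_B=F_{B'}$ then $B$ and $B'$ have the same cap segment $[p,p']$ on $H_1$; the two walls of $B$ pass through $p$ and $p'$ in one direction and those of $B'$ through $p$ and $p'$ in another, and a short computation shows the two ``crossed'' intersection points (a wall of $B$ with a wall of $B'$) are symmetric about the midpoint of $[p,p']$, which lies on $H_1$; hence one of them lies in $H_1^-$, contradicting sharpness. The same cap argument shows $[F_B]$ occurs in no $\nabla(B')$ for any other resonant band $B'$ of direction $\neq X_1$. This already settles part~(ii): its hypothesis ($(c\A)_{X_1}=\{H_1,H_2\}$, or $q_{X_1}\neq1$) means $X_1$ is not a resonant multiple point, so \emph{all} resonant bands have direction $\neq X_1$; reading off the coefficient of $[F_B]$ in a relation $\sum_B c_B\nabla(B)=0$ then forces every $c_B=0$, so $\Ker\nabla=0$ and $H^1(M(\A),\scL)=0$.

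For part~(i) the remaining work is to absorb the resonant bands parallel to $H_1$ (direction $X_1$): these form consecutive parallel strips, hence have pairwise disjoint standing-wave supports; moreover, since at a resonant affine multiple point $Y\neq X_1$ of $H_1$ the product of $q_G$ over the lines $G\neq H_1$ through $Y$ (all of direction $\neq X_1$) equals $q_1^{-1}\neq1$, so some such $G$ is non-resonant, and $H_1$ has $\geq2$ resonant points by hypothesis, $\A$ contains a non-resonant line of direction $\neq X_1$; this line crosses every such strip, so each of those standing waves is non-zero, and only the unique strip having $H_1$ as a wall can contain a chamber of the form $F_B$. Threading the $F_B$-relations through, one should find that every coefficient in a relation $\sum c\,\nabla=0$ is forced either to vanish or to be a fixed multiple of the coefficient of that one distinguished strip, whence $\dim\Ker\nabla\leq1$. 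I expect this final bookkeeping --- controlling precisely how the standing waves of the strips parallel to $H_1$ overlap those of the remaining resonant bands --- to be the main obstacle, and to be exactly where the bound ``$\leq1$'' (rather than ``$=0$'') comes from.
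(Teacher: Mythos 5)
Your strategy is the same as the paper's: send one line of the sharp pair to infinity, apply Theorem~\ref{thm:main}, and read off the coefficient of the distinguished chamber $F_B$ attached to each resonant band $B$ not parallel to $H_1$ (this is the paper's ``leftmost bounded chamber'' $C_B$, with $\Sep(U_2(B),F_B)=\{H_1\}$, hence coefficient $\pm(q_1^{1/2}-q_1^{-1/2})\neq 0$). Your proof of (ii) is complete and correct; in fact your central--symmetry argument for the injectivity of $B\mapsto F_B$, and for $F_B$ lying in no other resonant band of direction $\neq X_1$, is a careful justification of a step the paper only asserts (``$C_B$ is contained in the unique $\scL$-resonant band $B$'').

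For (i) you stop short, but the ``final bookkeeping'' you flag as the main obstacle is not an obstacle: it follows in one step from the lemmas you have already established, and it is exactly the paper's closing sentence. Concretely, suppose $q_{X_1}=1$, let $B_{left}$ be the unique strip through $X_1$ having $H_1$ as a wall, and let $\sum_B c_B\,B\in\Ker\nabla$. Since $F_B$ lies only in $B$ and $B_{left}$ among resonant bands and $\Delta(U_1(B),F_B)\neq 0$, the coefficient of $[F_B]$ in the relation expresses each $c_B$ ($B$ of direction $\neq X_1$) as a fixed multiple of $c_{B_{left}}$. For any other resonant strip $B_v$ through $X_1$ you have shown $\nabla(B_v)\neq 0$ (your non-resonant line $G$ of direction $\neq X_1$ crosses $B_v$; two chambers of $B_v$ adjacent across $G$ have $\Delta$-coefficients whose underlying monomials differ by the factor $q_G^{\pm1}\neq1$, so they cannot both vanish --- worth writing down, as ``$G$ crosses the strip'' alone does not give nonvanishing). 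Choose $D\subset B_v$ with $\Delta(U_1(B_v),D)\neq0$; since the strips through $X_1$ are pairwise disjoint, the only resonant bands containing $D$ are $B_v$ and bands of direction $\neq X_1$, whose coefficients are already determined, so the vanishing of the coefficient of $[D]$ determines $c_{B_v}$ as well. Every coefficient is therefore a fixed multiple of $c_{B_{left}}$, giving $\dim\Ker\nabla\leq 1$. With that sentence added, your argument is a complete proof along the same lines as the paper's.
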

\begin{proof}
By $PGL_3(\bC)$ action, we may assume that 
the line at infinity $H_\infty$ and $H_1=\{x=0\}$ 
form a sharp pair of non-resonant lines and there is no intersections in 
the region $\{(x,y)\in\R^2\mid x<0\}$ (see Figure \ref{fig:A(12,2)}). 
The intersection is $X=H_\infty\cap H_1=\{(0:1:0)\}$. Let $B$ be 
a horizontal (that is, non-vertical) band, 
that is a band which is not passing through 
the point $X$. 
We choose $U_1(B)$ to be the unbounded chamber in $B$ contained 
in the region $\{x<0\}$. 
Denote by $C_B$ the leftmost bounded chamber 
in $B$ (e.g., in Figure \ref{fig:A(12,2)}, there are four 
horizontal band $B_1, \dots, B_4$. In this case, 
$C_{B_1}=C_2, 
C_{B_2}=C_3, C_{B_3}=C_4$ and $C_{B_4}=C_6$). 

\begin{figure}[htbp]
\begin{picture}(400,150)(20,0)
\thicklines

\multiput(140,0)(40,0){4}{\line(0,1){150}}
\put(135,-8){\footnotesize $H_1$}
\multiput(100,45)(0,30){3}{\line(1,0){200}}
\put(100,15){\line(4,3){180}}
\put(120,0){\line(4,3){180}}
\put(100,135){\line(4,-3){180}}
\put(120,150){\line(4,-3){180}}
%\put(295,115){\huge $\infty$}

\put(111,133){$B_1$}
\put(111,85){$B_2$}
\put(111,55){$B_3$}
\put(111,7){$B_4$}
\put(155,0){$B_5$}
\put(195,0){$B_6$}
\put(235,0){$B_7$}

\put(148,112){\small $C_1$}
\put(164,92){\small $C_2$}
\put(148,82){\small $C_3$}
\put(148,63){\small $C_4$}
\put(164,50){\small $C_5$}
\put(148,33){\small $C_6$}

\end{picture}
      \caption{$H_1$ and $H_\infty$ form a sharp pair}
\label{fig:A(12,2)}
\end{figure}
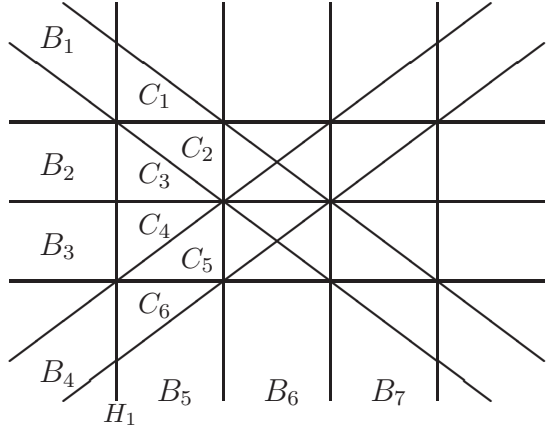

First consider the case $q_X\neq 1$. 
%that the multiplicity of $c\A$ at $p$ is 
%not divisible by $k$. 
Then all $\scL$-resonant bands are horizontal. 
Let $B\in\RB_\scL(\A)$. Then 
\begin{equation}
\label{eq:nozero}
\nabla(B)=
\Delta(U_1(B), C_B)
\cdot[C_B]+\cdots. 
\end{equation}
Thus $[C_B]$ has a non-zero coefficient. 
Note that $C_B$ is contained in the unique $\scL$-resonant band $B$. 
%, 
%$[C_B]$ does not appear in the linear combinations of 
%other standing waves. 
Hence the standing waves 
$\nabla(B), B\in\RB_k(\A)$ are linearly independent. 
Thus (ii) is proved. 

Now we assume that $q_X=1$. In this case, there are vertical 
$\scL$-resonant bands. Denote by $B_{left}$ the 
leftmost vertical band 
(in Figure \ref{fig:A(12,2)}, $B_{left}=B_5$). Suppose that 
$$
c_{left}\cdot B_{left}+\cdots \in\Ker(\nabla). 
$$
Let $B\in\RB_\scL(\A)$ be a horizontal $\scL$-resonant band. 
Then, since $C_B$ is contained in only $B$ and $B_{left}$, 
the coefficient $c_{left}$ of $B_{left}$ determines 
the coefficient of $B$. The coefficients of other 
vertical $\scL$-resonant bands are also determined by 
those of the horizontal bands. Hence $\Ker(\nabla)$ is 
at most one-dimensional. 
\end{proof}

\begin{corollary}
\label{cor:interior}
Suppose that $\dim H^1(M(\A), \scL)\geq 2$. Then for every sharp pair 
$H, H'\in c\A$ of non-resonant lines, there exist intersections of 
$c\A$ in both two regions determined by $H$ and $H'$. 
\end{corollary}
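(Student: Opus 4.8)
The plan is to prove the corollary in its contrapositive form: \emph{if $c\A$ contains a sharp pair $H,H'$ of non-resonant lines such that one of the two regions of $\R\bP^2\setminus(H\cup H')$ contains no intersection point of $c\A$, then $\dim H^1(M(\A),\scL)\le 1$.} Negating both sides returns the statement of the corollary, so it suffices to establish this implication. But ``a sharp pair of non-resonant lines, one of whose two regions contains no intersection of $c\A$'' is precisely the hypothesis of Theorem~\ref{thm:sharp}, and — under the running assumption of \S\ref{subsec:upper} that every non-resonant line of $c\A$ carries at least two resonant multiple points — part~(i) of that theorem yields exactly $\dim H^1(M(\A),\scL)\le 1$. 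So the corollary is a formal consequence of Theorem~\ref{thm:sharp}(i), and the write-up is essentially one line of contraposition together with a pointer to that theorem.

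The only thing worth a remark is the running hypothesis under which the statement is understood. If some non-resonant line $H_0\in c\A$ has \emph{no} resonant multiple point, then $H^1(M(\A),\scL)=0$ by Corollary~\ref{cor:cdo}, so $\dim H^1\le 1$ trivially; and if some non-resonant line has \emph{exactly one} resonant multiple point, then $\dim H^1(M(\A),\scL)$ is governed combinatorially by Corollary~\ref{cor:1pt} (equivalently Theorem~\ref{thm:1pt}), that is, it falls in the range already handled. Hence the genuine content of the corollary is exactly the case controlled by Theorem~\ref{thm:sharp}.

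The main difficulty is therefore entirely upstream, inside the proof of Theorem~\ref{thm:sharp}(i); the corollary merely repackages it, and I do not expect any obstacle at the level of the corollary itself. Were I to give a self-contained argument, I would replay that proof: use $PGL_3(\bC)$ to put $H=H_\infty$, $H'=\{x=0\}$, with no intersections of $c\A$ in $\{x<0\}$ and $X=H\cap H'=(0:1:0)$; split $\RB_\scL(\A)$ into ``horizontal'' bands (those not through $X$) and ``vertical'' bands (those through $X$); for each horizontal resonant band $B$ observe that its leftmost bounded chamber $C_B$ has nonzero coefficient in $\nabla(B)$ and lies in at most one further (vertical) resonant band; deduce that in any $\varphi\in\Ker(\nabla\colon\bC[\RB_\scL(\A)]\to\bC[\ch(\A)])$ the coefficients of the vertical bands are determined by those of the horizontal bands together with a single distinguished leftmost vertical band, whence $\dim\Ker\nabla\le 1$; and finally invoke Theorem~\ref{thm:main} to identify $\Ker\nabla$ with $H^1(M(\A),\scL)$.
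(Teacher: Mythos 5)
Your proposal matches the paper exactly: Corollary \ref{cor:interior} is stated there without a separate proof, as the immediate contrapositive of Theorem \ref{thm:sharp}(i) read under the standing reduction made just before that theorem in \S\ref{subsec:upper}. One small caveat on your side remark: for a non-resonant line with exactly one resonant multiple point, Corollary \ref{cor:1pt} only says $\dim H^1(M(\A),\scL)$ is combinatorially determined, and Theorem \ref{thm:1pt} can give $|(c\A)_X|-2\ge 2$ there, so that case is not literally ``already handled'' as giving $\dim H^1\le 1$ --- but this goes beyond what the paper itself addresses and does not affect the main line of your argument.
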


\begin{example}
\label{ex:2dim}
Let $\A=\{H_1, \dots, H_8\}$ 
be the deleted $B_3$-arrangement (see \S\ref{subsec:delb3} for 
the definition) and let 
$\bm{q}_+=(q_1, \dots, q_8)=(1,-1,-1,1,1,-1,1,-1)$. We compute 
$H^1(M(\A), \scL_{\bm{q}_+})$. 
Since $q_8\neq 1$, we can apply Theorem \ref{thm:main} to 
Figure \ref{fig:h8}. 
There are four resonant bands: $\RB_{\scL_{\bm{q}_+}}=
\{B_1, B_2, B_3, B_4\}$. 
Set $q^{1/2}_1=q^{1/2}_4=q^{1/2}_5=q^{1/2}_7=1$ and 
$q^{1/2}_2=q^{1/2}_3=q^{1/2}_6=q^{1/2}_8=i$. 
Then we have 
$$
\nabla(B_1)=\nabla(B_2)=\nabla(B_4)=2i\cdot C_2,\ 
\nabla(B_3)=2i(C_4+C_5+C_6), 
$$
and so $B_1-B_2, B_2-B_3\in\Ker(\nabla)$ form a basis. We have 
$\dim H^1(M(\A), \scL_{\bm{q}_+})=2$. 

If we set 
$\bm{q}_-=(q_1, \dots, q_8)=(-1,1,1,-1,1,-1,1,-1)$, 
similarly we have 
$$
\nabla(B_1)=\nabla(B_3)=\nabla(B_4)=2i\cdot C_5,\ 
\nabla(B_2)=2i(C_1+C_2+C_3), 
$$
and $\dim H^1(M(\A), \scL_{\bm{q}_-})=2$. 
\end{example}

\section{Example: The deleted $B_3$-arrangement} 
%\section{An Example} 
\label{sec:ex}

%\subsection{Strategy}

Using Theorem \ref{thm:main}, we can compute the characteristic variety 
of line arrangements. 
We apply the following strategy to the deleted $B_3$-arrangement. 
\begin{itemize}
\item[(1)] Fix a line $H$. 
\begin{itemize}
\item Put $H$ at $\infty$ and assume that $q_H\neq 1$. 
\item Using Theorem \ref{thm:main}, compute the characteristic 
variety using the assumption that $q_H\neq 1$. 
\end{itemize}
\item[(2)] Assume that $q_H=1$ and choose another line $H'$, go back to (1). 
\end{itemize}

\subsection{Deleted $B_3$-arrangement} 
\label{subsec:delb3}

The deleted $B_3$-arrangement is an arrangement of $8$ lines 
in $\R\bP^2$ defined by the following equations. 
\begin{equation}
\begin{split}
H_1:\ & y=0\\
H_2:\ & y-z=0\\
H_3:\ & x=0\\
H_4:\ & x-z=0\\
H_5:\ & x-y+z=0\\
H_6:\ & x-y=0\\
H_7:\ & x-y-z=0\\
H_8:\ & z=0. 
\end{split}
\end{equation}
(We use the numbering in \cite[Example 10.6]{suc-fundam}. 
See 
Figure \ref{fig:h5}, 
\ref{fig:h8}, 
\ref{fig:h6} and 
\ref{fig:h3} below.) 

In the sequel we compute the characteristic variety 
\begin{equation}
V_1(M(\A))=
\{\bm{q}=(q_1, \dots, q_8)\in(\bC^*)^8\mid
q_1q_2\cdots q_8=1,\ 
\dim H^1(M(\A), \scL_{\bm{q}})\geq 1\}
\end{equation}
of the deleted $B_3$-arrangement (without using computer). 

\subsection{The case $q_5\neq 1$}

First, we consider the case $q_5\neq 1$. There are two 
multiple points $235$ and $5678$ on $H_5$ (Figure \ref{fig:h5}). 

\begin{figure}[htbp]
\begin{picture}(350,170)(0,0)
\thicklines

\multiput(150,10)(50,0){3}{\line(0,1){150}}
\multiput(120,35)(0,100){2}{\line(1,0){160}}

\put(125,10){\line(1,1){150}}
\put(275,10){\line(-1,1){150}}

\put(142,162){$H_8$}
\put(192,162){$H_7$}
\put(242,162){$H_6$}
\put(107,162){$H_4$}
\put(107,132){$H_3$}
\put(107,32){$H_2$}
\put(110,7){$H_1$}

\put(110,85){$B_1$}
\put(170,150){$B_2$}
\put(220,150){$B_3$}

\put(164,85){$C_2$}
\put(180,114){$C_1$}
\put(180,47){$C_3$}

\put(225,85){$C_5$}

\put(209,114){$C_4$}
\put(209,47){$C_6$}

\end{picture}
      \caption{The deleted $B_3$-arrangement with line at infinity $H_5$}
\label{fig:h5}
\end{figure}
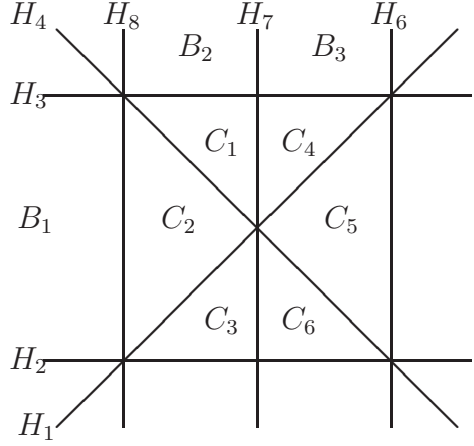
There four cases. 
\begin{itemize}
\item[(1)] $q_{235}\neq 1, q_{5678}\neq 1$. 
Then by Corollary \ref{cor:empty}, we have 
$H^1(\scL_{\bm{q}})=0$. 
\item[(2)] $q_{235}\neq 1, q_{5678}=1$. 
Then by Theorem \ref{thm:1pt}, 
$H^1(\scL_{\bm{q}})=0$ if and only if 
$q_1=q_2=q_3=q_4=1$ (and then $\dim H^1=2$). 
It is corresponding to the component $C_{5678}$ in 
\cite[Example 10.6]{suc-fundam}. 
\item[(3)] $q_{235}=1, q_{5678}\neq 1$. 
Then by Theorem \ref{thm:1pt}, 
$H^1(\scL_{\bm{q}})=0$ if and only if 
$q_1=q_4=q_6=q_7=q_8=1$ (and then $\dim H^1=1$). 
It is corresponding to the component $C_{235}$ in 
\cite[Example 10.6]{suc-fundam}. 
\item[(4)] $q_{235}=q_{5678}=1$. 
This case requires more detailed analysis as follows. 
\end{itemize}
In the case (4), there are three resonant bands 
$\RB_{\scL_{\bm{q}}}=\{B_1, B_2, B_3\}$. 
The standing waves $\nabla(B_i)$s are computed as follows. 
\begin{equation*}
\begin{pmatrix}
\nabla(B_1)\\
\nabla(B_2)\\
\nabla(B_3)
\end{pmatrix}
=
\begin{pmatrix}
\Delta(48)&\Delta(8)&\Delta(18)&\Delta(478)&\Delta(1478)&\Delta(178)\\
\Delta(3)&\Delta(34)&\Delta(134)&0&0&0\\
0&0&0&\Delta(3)&\Delta(13)&\Delta(134)
\end{pmatrix}
\begin{pmatrix}
C_1\\
C_2\\
C_3\\
C_4\\
C_5\\
C_6
\end{pmatrix}, 
\end{equation*}
where we use the abbreviation 
$\Delta(ijk):=q^{1/2}_{ijk}-q^{-1/2}_{ijk}$. 
%Note that $\Delta(ijk)=0$ if and only if $q_iq_jq_k=1$. 
It is easily seen that $\Ker\nabla\neq 0$ if and only if the following 
six $2\times 2$ minors are all zero. 
\begin{equation*}
\begin{split}
D_1=
\det
\begin{pmatrix}
\Delta(48)&\Delta(8)\\
\Delta(3)&\Delta(34)
\end{pmatrix}
=&\Delta(4)\Delta(348),\\
D_2=
\det
\begin{pmatrix}
\Delta(8)&\Delta(18)\\
\Delta(34)&\Delta(134)
\end{pmatrix}
=&\pm \Delta(1)\Delta(128),\\
D_3=
\det
\begin{pmatrix}
\Delta(48)&\Delta(18)\\
\Delta(3)&\Delta(134)
\end{pmatrix}
=&\Delta(4)\Delta(1348)+q^{1/2}_{1234}\Delta(1)\Delta(1248),\\
D_4=
\det
\begin{pmatrix}
\Delta(478)&\Delta(1478)\\
\Delta(3)&\Delta(13)
\end{pmatrix}
=&\pm\Delta(1)\Delta(136),\\
D_5=
\det
\begin{pmatrix}
\Delta(1478)&\Delta(178)\\
\Delta(13)&\Delta(134)
\end{pmatrix}
=&\pm\Delta(4)\Delta(246),\\
D_6=
\det
\begin{pmatrix}
\Delta(478)&\Delta(178)\\
\Delta(3)&\Delta(134)
\end{pmatrix}
=&q^{1/2}_{1234}\Delta(4)\Delta(1246)+\Delta(1)\Delta(1346).
\end{split}
\end{equation*}
Since $5678$ and $235$ are resonant points, 
$q_{1234}=q_{14678}=1$. 
By the relations $D_1=D_2=D_4=D_5=0$, it is natural 
to divide into four cases: 
\begin{itemize}
\item[(i)] 
$\Delta(4)=\Delta(1)=0$. 
\item[(ii)] 
$\Delta(4)\neq 0,\ \Delta(1)=\Delta(348)=\Delta(246)=0$. 
\item[(iii)] 
$\Delta(1)\neq 0,\ \Delta(4)=\Delta(128)=\Delta(136)=0$. 
\item[(iv)] 
$\Delta(4)\neq 0,\ \Delta(1)\neq 0,\ 
\Delta(348)=\Delta(136)=\Delta(128)=\Delta(246)=0$. 
\end{itemize}

The case (i) can not happen. Indeed, (i) implies that 
$q_6q_7q_8=q_2q_3=1$ and then we have $q_5=1$, which contradicts the 
assumption. 

(ii) implies that 
$$
q_1=q_3q_4q_8=q_2q_4q_6=q_2q_3q_5=q_1q_4q_6q_7q_8=q_5q_6q_7q_8=
q_1q_2q_3q_4=1. 
$$
From these relations, we obtain $q_3=q_6, q_4=q_5, q_2=q_8$ and $q_7=1$. 
These parameters form a component 
\begin{equation}
%\Pi_1=
C_{(28|36|45)}=
\{(1,s_1,s_2,s_3,s_3,s_2,1,s_1)\in(\bC^*)^8\mid
s_1s_2s_3=1\}
\end{equation}
corresponding to the braid subarrangement $\{H_2,H_3,H_4,H_5,H_6,H_8\}$. 
Similarly, from (iii), we have 
\begin{equation}
%\Pi_2=
C_{(15|26|38)}=
\{(s_1,s_2,s_3,1,s_1,s_2,1,s_3)\in(\bC^*)^8\mid
s_1s_2s_3=1\}
\end{equation}
corresponding to the braid subarrangement $\{H_1,H_2,H_3,H_5,H_6,H_8\}$. 

(iv) obviously implies that $D_1=D_2=D_4=D_5=0$. Since 
$q^{1/2}_{348}, 
q^{1/2}_{136},  
q^{1/2}_{128},  
q^{1/2}_{246}\in\{\pm 1\}$, other conditions are equivalent to 
\begin{equation*}
\begin{split}
D_3=0&\Longleftrightarrow q^{1/2}_{348}+q^{1/2}_{128}q^{1/2}_{1234}=0,\\
D_6=0&\Longleftrightarrow q^{1/2}_{136}+q^{1/2}_{246}q^{1/2}_{1234}=0. 
\end{split}
\end{equation*}
Since the choice of $q^{1/2}_i$ has freedom of the sign, we 
may assume that $q^{1/2}_{348}=q^{1/2}_{128}=1$. Then we have 
$q^{1/2}_{34}=q^{1/2}_{12}=q^{-1/2}_8$. Then $D_3=0$ is equivalent to 
$q^{1/2}_{1234}=-1$, which implies that 
\begin{equation*}
\begin{split}
D_3=0&\Longleftrightarrow q^{1/2}_{12}=q^{1/2}_{34}=q^{-1/2}_8=i\\
D_6=0&\Longleftrightarrow q^{1/2}_{13}=q^{1/2}_{24}(=\pm i). 
\end{split}
\end{equation*}
Set $q^{1/2}_1=\lambda$. Then 
$q^{1/2}_2=i\lambda^{-1}, 
q^{1/2}_3=\pm i\lambda^{-1}, 
q^{1/2}_4=\pm\lambda, 
q^{1/2}_5=\mp\lambda^2, 
q^{1/2}_6=\mp iq^{1/2}_{136}, 
q^{1/2}_7=\pm q^{1/2}_{136}\lambda^{-2}, 
q^{1/2}_8=-i$. The parameters $q_i=(q^{1/2}_i)^2$ form the following component 
(we set $s=\lambda^2$) 
\begin{equation*}
\Omega=
\{(s, -s^{-1}, -s^{-1}, s, s^2, -1, s^{-2}, -1)\in
(\bC^*)^8\mid s\in\bC^*
\}. 
\end{equation*}
It is the so-called translated component \cite{suc-trans, suc-fundam}. 

\subsection{$q_7\neq 1$}

This case is quite similar to the case $q_5\neq 1$. 
If $H^1(\scL_{\bm{q}})\neq 0$, then we have: 
\begin{equation}
q\in 
C_{5678}\cup C_{147}\cup
C_{(18|37|46)}\cup
C_{(16|27|48)}\cup\Omega. 
\end{equation}

\subsection{$q_5=q_7=1, q_8\neq 1$}

In this case, we compute $H^1$ with Figure \ref{fig:h8}. 

\begin{figure}[htbp]
\begin{picture}(350,200)(-25,0)
\thicklines

\multiput(150,10)(50,0){2}{\line(0,1){180}}
\multiput(50,75)(0,50){2}{\line(1,0){250}}

\multiput(135,10)(-25,25){3}{\line(1,1){130}}

\put(35,68){$H_1$}
\put(35,118){$H_2$}
\put(142,192){$H_3$}
\put(192,192){$H_4$}
\put(217,192){$H_5$}
\put(242,168){$H_6$}
\put(267,142){$H_7$}

\put(70,95){$B_1$}
\put(95,45){$B_2$}
\put(120,20){$B_3$}
\put(170,10){$B_4$}

\put(130,85){$C_1$}
\put(180,85){$C_5$}
\put(180,135){$C_3$}

\put(159,107){$C_2$}
\put(209,107){$C_6$}
\put(159,57){$C_4$}

\end{picture}
      \caption{The deleted $B_3$-arrangement with line at infinity $H_8$}
\label{fig:h8}
\end{figure}
The set of resonant bands $\RB_\scL$ is a subset of 
$\{B_1, B_2, B_3, B_4\}$. The coefficients of $\nabla(B_i)$ are 
as follows. 
\begin{equation*}
\begin{array}{c|cccccc}
&C_1&C_2&C_3&C_4&C_5&C_6\\
\hline
\nabla(B_1)&0&\Delta(3)&0&0&\Delta(36)&0\\
\nabla(B_2)&\Delta(1)&\Delta(13)&\Delta(123)&0&0&0\\
\nabla(B_3)&0&0&0&\Delta(3)&\Delta(13)&\Delta(134)\\
\nabla(B_4)&0&\Delta(16)&0&0&\Delta(1)&0
\end{array}
\end{equation*}
There are three multiple points $128, 5678$ and $348$ 
on $H_8$. We divide into $8$ cases. 
\begin{itemize}
\item[(i)] 
$q_{128}\neq 1, q_{5678}\neq 1, q_{348}\neq 1$. 
By Corollary \ref{cor:empty}, $H^1=0$. 
\item[(ii)] 
$q_{128}=1, q_{5678}\neq 1, q_{348}\neq 1$. 
By Theorem \ref{thm:1pt}, $H^1\neq 0$ if and only if 
$q_3=q_4=q_5=q_6=q_7=1$. Hence $\bm{q}=(q_1, \dots, q_8)$ is 
contained in $C_{128}$. 
\item[(iii)] 
$q_{128}\neq 1, q_{5678}= 1, q_{348}\neq 1$. 
By Theorem \ref{thm:1pt}, $H^1\neq 0$ if and only if 
$q_1=q_2=q_3=q_4=1$. Hence $\bm{q}=(q_1, \dots, q_8)$ is 
contained in $C_{5678}$. 
\item[(iv)] 
$q_{128}= 1, q_{5678}= 1, q_{348}\neq 1$. 
If $H^1(\scL_{\bm{q}})\neq 0$, then we can prove that 
$\bm{q}$ is contained in 
$$
\{(1,t^{-1},t,1,1,t^{-1},1,t)\mid t\in\bC^*\}\cup
\{(t^{-1},1,1,t,1,t^{-1},1,t)\mid t\in\bC^*\}, 
$$
which is a subset of $C_{(15|26|38)}\cup C_{(16|27|48)}$. 
\item[(v)] 
$q_{128}\neq 1, q_{5678}\neq 1, q_{348}= 1$. 
If $H^1(\scL_{\bm{q}})\neq 0$, then $\bm{q}\in C_{348}$. 
\item[(vi)] 
$q_{128}= 1, q_{5678}\neq 1, q_{348}= 1$. 
In this case, there are two resonant bands $B_1$ and $B_4$. 
Thus $H^1(\scL_{\bm{q}})\neq 0$ if and only if 
the minor 
$$
\det
\begin{pmatrix}
\Delta(3)&\Delta(36)\\
\Delta(16)&\Delta(1)
\end{pmatrix}
=-\Delta(6)\Delta(136)
$$
is equal to zero. Suppose that $\Delta(6)=0$. Then since 
$q_{34567}=q_5=q_7=1$, we have $q_{34}=1$, which implies $q_8=1$. 
Hence we may assume that $\Delta(6)\neq 0$ and $\Delta(136)=0$. We 
can conclude that $H^1\neq  0$ if and only if $\bm{q}$ is 
contained in 
\begin{equation*}
C_{(14|23|68)}=
\{(s_1,s_2,s_2,s_1,1,s_3,1,s_3)\mid s_i\in\bC^*, s_1s_2s_3=1\}. 
\end{equation*}

\item[(vii)] 
$q_{128}\neq 1, q_{5678}= 1, q_{348}= 1$. It is similar to the 
case (iv). We can prove that if $H^1\neq 0$, then $\bm{q}$ is 
contained in 
\begin{equation*}
C_{(28|36|45)}\cup
C_{(18|37|46)}. 
\end{equation*}

\item[(viii)] 
$q_{128}=1, q_{5678}= 1, q_{348}= 1$. 
We can prove that $\bm{q}\in C_{(14|23|68)}$. 
\end{itemize}

\subsection{$q_5=q_7=q_8=1, q_6\neq 1$}

We use Figure \ref{fig:h6}. This case is similar to the 
previous case. Two new components $C_{136}$ and $C_{246}$ appear. 
Other parameters are 
contained in components which have been already known.

\begin{figure}[htbp]
\begin{picture}(350,200)(-25,0)
\thicklines

\multiput(150,10)(50,0){2}{\line(0,1){180}}
\multiput(50,75)(0,50){2}{\line(1,0){250}}

\multiput(135,10)(-25,25){3}{\line(1,1){130}}

\put(35,68){$H_2$}
\put(35,118){$H_4$}
\put(142,192){$H_1$}
\put(192,192){$H_3$}
\put(217,192){$H_7$}
\put(242,168){$H_8$}
\put(267,142){$H_5$}

\put(70,95){$B_1$}
\put(95,45){$B_2$}
\put(120,20){$B_3$}
\put(170,10){$B_4$}

\put(130,85){$C_1$}
\put(180,85){$C_5$}
\put(180,135){$C_3$}

\put(159,107){$C_2$}
\put(209,107){$C_6$}
\put(159,57){$C_4$}

\end{picture}
      \caption{The deleted $B_3$-arrangement with line at infinity $H_6$}
\label{fig:h6}
\end{figure}
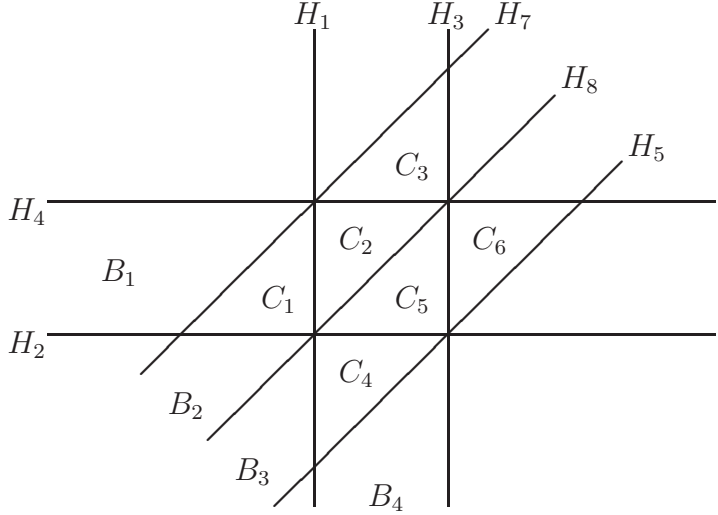

\subsection{$q_5=q_6=q_7=q_8=1, q_3\neq 1$}

We use Figure \ref{fig:h3}. There are three bands $B_1, B_2, B_3$. 

\begin{figure}[htbp]
\begin{picture}(350,200)(-25,0)
\thicklines

\multiput(150,10)(50,0){2}{\line(0,1){180}}
\multiput(50,75)(0,50){2}{\line(1,0){250}}

\put(135,10){\line(1,1){150}}
\put(100,25){\line(1,1){160}}
\put(250,25){\line(-1,1){160}}

\put(35,68){$H_8$}
\put(35,118){$H_4$}
\put(142,192){$H_1$}
\put(192,192){$H_6$}
\put(80,188){$H_7$}
\put(90,16){$H_2$}
\put(125,0){$H_5$}

\put(70,95){$B_2$}
%\put(95,45){$B_2$}
\put(120,20){$B_3$}
\put(170,180){$B_1$}

%\put(130,85){$C_1$}
\put(170,80){$C_4$}
\put(185,95){$C_3$}
%\put(180,135){$C_3$}

\put(169,112){$C_2$}
\put(154,97){$C_1$}
\put(209,107){$C_5$}
\put(159,57){$C_6$}

\end{picture}
      \caption{The deleted $B_3$-arrangement with line at infinity $H_3$}
\label{fig:h3}
\end{figure}
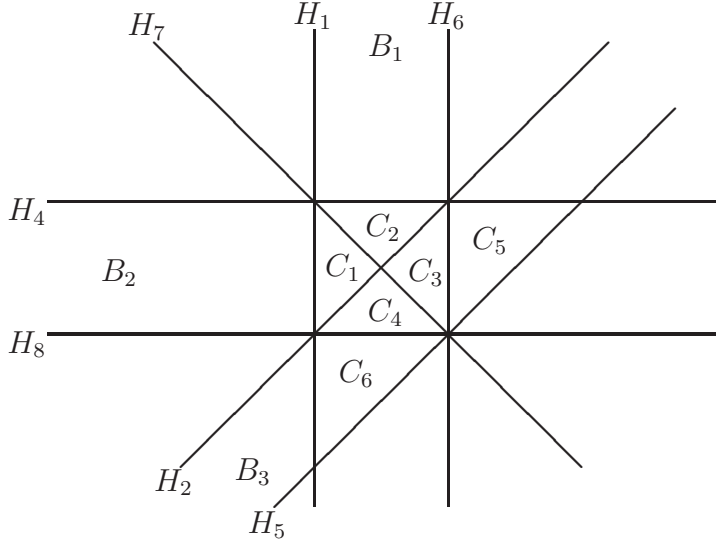

\begin{equation*}
\begin{array}{c|cccccc}
&C_1&C_2&C_3&C_4&C_5&C_6\\
\hline
\nabla(B_1)&\Delta(4)&\Delta(4)&\Delta(24)&\Delta(24)&0&\Delta(24)\\
\nabla(B_2)&\Delta(1)&\Delta(1)&\Delta(12)&\Delta(12)&\Delta(12)&0\\
\nabla(B_3)&0&0&\Delta(1)&\Delta(1)&\Delta(1)&\Delta(1)
\end{array}
\end{equation*}
If bands $B_1$, $B_2$ and $B_3$ are all resonant, then 
$\Delta(13)=\Delta(24)=\Delta(12)=\Delta(34)=
\Delta(14)=\Delta(23)=0$. We have 
$q_1=q_2=q_3=q_4=-1$, and so $\bm{q}\in C_{(14|23|68)}$. Other 
cases are similar. Consequently, in this case, $H^1\neq 0$ 
implies that 
\begin{equation*}
\bm{q}\in
C_{235}\cup C_{348}\cup C_{(14|23|68)}. 
\end{equation*}
So the parameter $\bm{q}$ is contained in the known components. 

\subsection{$q_3=q_5=q_6=q_7=q_8=1$}

The remaining case is easily seen that 
$H^1(\scL_{\bm{q}})= 0$ if $\bm{q}\neq (1,1,\dots, 1)$. 

\medskip

Consequently, we have the decomposition of the 
characteristic variety as follows. 
\begin{equation*}
\begin{split}
V_1(M(\A))=&
C_{136}\cup
C_{147}\cup
C_{235}\cup
C_{128}\cup
C_{246}\cup
C_{348}\cup
C_{5678}\cup
\\
&
C_{(14|23|68)}\cup
C_{(28|36|45)}\cup
C_{(15|26|38)}\cup
C_{(18|37|46)}\cup
C_{(16|27|48)}\cup
\Omega. 
\end{split}
\end{equation*}

\medskip

\noindent
{\bf Acknowledgement.} 
Part of this work was done while the author 
was visiting the Universidad de Zaragoza. 
The author thanks 
Professor E. Artal Bartolo and Professor J. I. Cogolludo-Agust\'in 
for their support, hospitality and valuable discussions. 
This work is supported by JSPS Grant-in-Aid for Young Scientists (B). 
%by . 
%especially to Prof. Mario Salvetti. 
%The author gratefully acknowledges the financial support 
%of JSPS. %Excellent Young Researchers Overseas Visit Program. 

%\noindent
%Masahiko Yoshinaga, \\
%Department of Mathematics, \\
%Hokkaido University, \\
%North 10, West 8, Kita-ku, \\
%Sapporo, 060-0810, \\
%JAPAN\\
%Email: yoshinaga@math.sci.hokudai.ac.jp

\end{document}